\newtheorem{definition}{Definition}
\newtheorem{lemma}[definition]{Lemma}
\newtheorem{proposition}[definition]{Proposition}
\newtheorem{corollary}[definition]{Corollary}
\newtheorem{remark}[definition]{Remark}
\newtheorem{assumption}{Assumption}
\newcommand{\N}{\mathbb{N}}
\newcommand{\Z}{\mathbb{Z}}
\newcommand{\Q}{\mathbb{Q}}
\newcommand{\R}{\mathbb{R}}
\renewcommand{\S}{\mathcal{S}}
\newcommand{\bx}{\mathbf{x}}
\newcommand{\by}{\mathbf{y}}
\newcommand{\ba}{\mathbf{a}}
\newcommand{\bb}{\mathbf{b}}
\newcommand{\Sigmab}{\overline\Sigma}
\begin{document}

\title{On some symmetric multidimensional\\ continued fraction algorithms}

\author{Pierre Arnoux
\thanks{
Equipe Dynamique, Arithmétique et Combinatoire,
Institut de Mathématique de Luminy,
CNRS UPR 9016,
163, avenue de Luminy, Case 907,
13288 MARSEILLE cedex 09,
FRANCE,
\texttt{pierre@pierrearnoux.fr}}
\and
Sébastien Labbé
\thanks{
B\^at. B37 Institut de Math\'ematiques,
Grande Traverse 12,
4000 Li\`ege, Belgium,
\texttt{slabbe@ulg.ac.be}}
}

\date{}

\maketitle

\begin{abstract}
We compute explicitly the density of the invariant measure for the  Reverse algorithm which is absolutely continuous with respect to Lebesgue measure, using a method proposed by Arnoux
and Nogueira. We also apply the same method on the unsorted version of
Brun algorithm and Cassaigne algorithm. We illustrate some experimentations on the domain of the natural
extension of those algorithms. For some other algorithms, which are known to have a unique invariant measure absolutely continuous with respect to Lebesgue measure, the invariant domain found by this method seems to have a fractal boundary, and it is unclear that it is of positive measure.

\noindent
\emph{Keywords}: Multidimensional continued fractions algorithms. Invariant measure.
Natural extension. Fractal.

\noindent
\emph{2010 Mathematics Subject Classification}: 37A45, 37C40.
\end{abstract}


\section{Introduction}\label{sec:intro}

Many continued fraction algorithms, in one or several dimensions, have a unique ergodic measure which is absolutely continuous with respect to Lebesgue measure (the so-called Gauss measure); this measure plays an important role in the dynamic of the algorithm, and it is interesting, when possible, to give an explicit closed formula for the invariant density. 

It is easy to check if a given function $\delta$ is an invariant density for a map $F$, since it must be solution of the functional equation $\delta(x)=\sum_{y; F(y)=x} \frac {\delta(y)}{J_F(y)}$, where $J_F$ is the jacobian of $F$; but it is generally difficult to find an explicit solution to this equation. One method, as proposed by Arnoux and Nogueira \cite{MR1251147}, is to build a geometric model of the natural extension of $F$, since it is often easier to find invariant densities for invertible maps. Another advantage of this method is that it gives a suspension flow on a manifold which can sometimes be linked to other areas of mathematics; for example, for Farey continued fraction, this flow can be recognized as the geodesic flow on the modular surface; for Rauzy induction on interval exchange maps, it is a Teichm\"uller flow.

The basic idea is as follows. A continued fraction algorithm can often be presented as a piecewise-linear map $F$ on the positive cone of $\R^d$ (or a subcone of it), given locally as $\bx\mapsto M^{-1}.\bx$, where $M$ is  a positive matrix. We can try to build a model $\widetilde F$ for the natural extension as a skew product on a cone of $\R^d\times\R^d$, $(\bx, \ba)\mapsto (M^{-1}.\bx, M^{\top}.\ba)$. We can look for a cone on which this map is one-to-one, up to a set of measure 0; contraction arguments (A generalization of Hutchinson's theorem, see \cite{2015_Arnoux_Schmidt}) ensure, in most interesting cases, that such a set is unique, and must exist (but it could be of measure 0).

If such a set $D$ can be found, then $\widetilde F: D\to D$ is by construction a bijection which preserves Lebesgue measure and the scalar product $\bx.\ba$.  Furthermore, the flow $\varphi_t$ defined on $D$ by $\varphi_t(\bx,\ba)=(e^t\bx, e^{-t}\ba)$ also preserves the measure and the scalar product, and commutes with $\widetilde F$. Let  $D_1$ be the subset of $D$ defined by $\bx.\ba=1$, and $\Omega$ be the quotient $D_1/\widetilde F$; the flow $\varphi_t$ projects to the quotient, and we can construct a natural extension of the projective map $f$ associated with $F$ as first return map of the flow to a section.

In this paper, we use this method to find the invariant density for two multidimensional continued fraction algorithms, the Reverse algorithm and the Brun algorithm, and we explain the problems encountered to apply  this method to some other algorithms. Finding the set $D$ is the crucial part of the method. A set $D$ is known for Brun algorithm and is not known for the Hurwitz nor the Jacobi-Perron algorithm. Experimentations seem to show that it has a fractal structure for Jacobi-Perron and Arnoux-Rauzy-Poincaré algorithms.

In Section~\ref{sec:farey}, as an example, we show how to compute the invariant
density for the unsorted Farey map. In Section~\ref{sec:constructing}, we give the
general argument. In Section~\ref{sec:reverse}, we apply it to the Reverse
algorithm, in Section~\ref{sec:cassaigne} to the Cassaigne algorithm and in
Section~\ref{sec:brun}, to the Brun algorithm. In
Section~\ref{sec:experimentations}, we explain how numerical computations can show in some
cases the invariant set, but give more complicated results (fractal sets) in
other cases. In Section~\ref{sec:remarks}, we make some  remarks about sorted and
unsorted algorithms, acceleration and choices of coordinates, to explain why
one can find many variants of the continued fraction algorithms, and we
illustrate that on the classical continued fraction. We also give the explicit
computation of the invariant density for Brun's algorithm in any dimension.

\section{The unsorted Farey map}\label{sec:farey}

Consider the positive cone $\Lambda=\{(x,y)\in\R^2\mid 0<x,0<y\}$ and define
\[
F:(x,y)\mapsto
\begin{cases}
(x,y-x)   & \mbox{if } x<y, \\
(x-y,y)   & \mbox{if } x>y.
\end{cases}
\]

\begin{remark}
The map $F$ is well-defined, except on a set of Lebesgue measure 0. There is no canonical way to define it on the diagonal $x=y$; we could choose a convention, but since we will be interested in the ergodic properties of $F$, this is irrelevant. This will occur in all the examples of the paper, and we will no more remark about it.
\end{remark}

Since the map $F$ is homogeneous of degree 1 (it commutes with positive homotheties), there is an associated projective map $f$, which can be defined on the unit simplex. This projective version of $F$ on $\Delta = \{ (x,y)\in\Lambda \mid
x+y=1\}$ is defined as $f: \bx \mapsto \frac{F(\bx)}{\Vert
F(\bx)\Vert_1}$. If we take the first variable $x$ as coordinate on the unit
simplex, $f$ can be defined as (see Figure~\ref{fig:Fareyt})
\[
\begin{array}{rrcl}
f: & [0,1]  & \to     & [0,1]\\
   & x     & \mapsto & 
\begin{cases}
\frac{x}{1-x}   & \mbox{if } x<\frac{1}{2}, \\
2-\frac{1}{x}   & \mbox{if } x>\frac{1}{2}.
\end{cases}
\end{array}
\]

\begin{figure}[h]
\begin{center}
\includegraphics{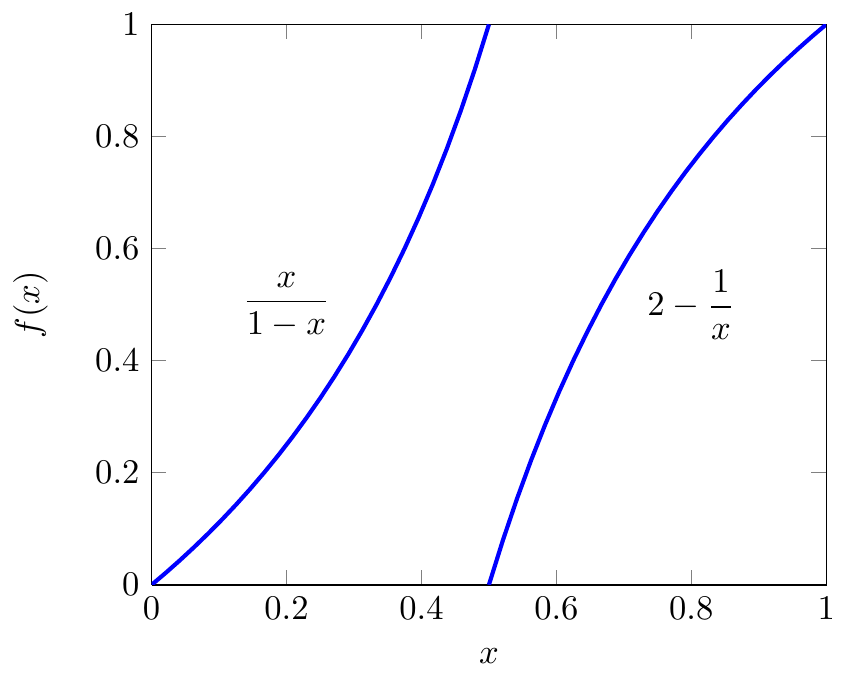}
\end{center}
\caption{
The graph of the unsorted Farey map.
}
\label{fig:Fareyt}
\end{figure}

\begin{remark}
Except for a set of measure 0 (the points $(x,y)$ such that $\frac xy$ is rational), the positive orbit $\{ F^n(x,y)\}$ is defined for all $n\in \N$, and the norm $|x+y|$ is strictly decreasing along the orbit; the same proof shows that the positive orbit $\{f^n(x)\}$ is well defined  for all $n\in \N$ if and only if $x$ is irrational.
\end{remark}
Below we compute an invariant measure of $f$,  using a method that also works
for some multidimensional continued fractions algorithms. Note that neither $F$ nor
$f$ are bijection. For instance, we have $F(2,5)=F(5,3)=(2,3)$:
\begin{center}
\includegraphics{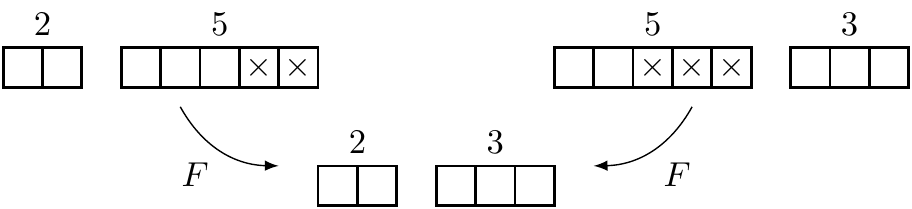}
\end{center}
One way to transform $F$ into a bijection is to stack the removed part and
pile up the result in the following way, as done in \cite{MR1251147}:
\begin{center}
\includegraphics{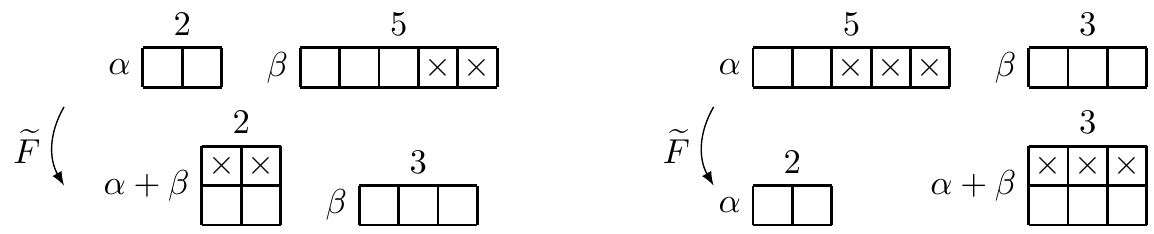}
\end{center}
We thus constructed the following function 
\[
\widetilde F:(x,y,\alpha,\beta)\mapsto
\begin{cases}
(x,y-x,\alpha+\beta,\beta)    & \mbox{if } x<y, \\
(x-y,y,\alpha,\alpha+\beta)   & \mbox{if } x>y.
\end{cases}
\]
which is an explicit model of the \emph{natural extension} of $F$.
It is a bijection on the domain $D=\{(x,y,\alpha,\beta)\in\R^2\times\R^2\mid
x>0,y>0,\alpha>0,\beta>0\}$; indeed, if we define $\Lambda_1=\{(x,y)\in\R^2\mid 0<x<y\}$ and $\Lambda_2=\{(x,y)\in\R^2\mid 0<y<x\}$, we have $\widetilde F(\Lambda_1\times \Lambda)=\Lambda\times \Lambda_2$ and  $\widetilde F(\Lambda_2\times \Lambda)=\Lambda\times \Lambda_1$;  Since $\Lambda_1, \Lambda_2$ is a measurable partition of $\Lambda$, $\widetilde F$ sends a partition of $D$ to another.

Both branches of $\widetilde F$ can be written linearly as the action by a $4\times 4$
matrix of the form
\[
\left(\begin{array}{cc}
M^{-1} & 0 \\
0      & M^\top
\end{array}\right)
\quad
\text{where}\quad
M=
\left(\begin{array}{cc}
1  & 0 \\
1 & 1
\end{array}\right)
\quad	
\text{if } x<y
\quad
\text{and}\quad
M=
\left(\begin{array}{cc}
1 & 1 \\
0 & 1
\end{array}\right)
\quad	
\text{if } x>y.
\]
Hence $\widetilde F$ preserves Lebesgue measure, since its jacobian is $1$.  It also preserves the form $x\alpha+y\beta$, that is, the total  area of the
rectangles. 

We want to find a natural extension and an invariant measure for the projective map. For this, we introduce the flow $\varphi_t$:
\[
\varphi_t:(x,y,\alpha,\beta)\mapsto
(xe^t,ye^t,\alpha e^{-t},\beta e^{-t}).
\]
This flow has jacobian $1$, thus preserves Lebesgue measure, and also preserves the form $x\alpha+y\beta$. It commutes with $\widetilde F$, that is, $\varphi_t(\widetilde F(\bx))=\widetilde F(\varphi_t(\bx))$.

Define now a subset $D_1$ of codimension 1 of $D$ by
$D_1=\{(x,y,\alpha,\beta)\in D \mid x\alpha+y\beta=1\}$, and a surface
$\Sigma\subseteq D_1$ by $\Sigma=\{(x,y,\alpha,\beta)\in D_1 \mid x+y=1\}$. 

$D_1$ is invariant by $\widetilde F$ and $\varphi_t$; $\widetilde F$ acts without periodic points since it decreases strictly $x+y$. The surface $\Sigma$ is a section for $\varphi_t$, since, for any $(x,y,\alpha,\beta)\in D_1$, there exists a unique $\tau=-\log(x+y)$ such that $\varphi_{\tau}(x,y,\alpha,\beta)\in\Sigma$.

Since $\varphi_t$ commutes with $\widetilde F$, we can consider the induced
flow on $D_1/\widetilde F$ and the first return map of
this flow to the projection $\Sigmab$ of the section $\Sigma$ on
$D_1/\widetilde F$. By construction, the effect of this map on the first
coordinates is the effect of the map $F$ followed by a renormalization, that
is, the projective map $f$.

We consider the change of coordinates $(x,y,\alpha,\beta)\mapsto
(x, \alpha-\beta, -\log(x+y),x\alpha+y\beta)=(x,\alpha',\tau, e)$; computation
shows that the jacobian is 1, so Lebesgue measure is given in these coordinates as $dx\, d\alpha'\, d\tau\, de$. The domain $D_1$ is given by $e=1$, and the surface of section $\Sigma$ by $\tau=0$, hence the first return map $\widetilde f$ must leave invariant the measure $dx\, d\alpha'$. Indeed, a straightforward computation shows that $\widetilde f$ is given in
the coordinates of $\Sigma$ by
\[
\begin{array}{rrcl}
    \widetilde f: & \Sigma  & \to     & \Sigma\\
   & (x,\alpha')     & \mapsto & 
\begin{cases}
\left(\frac{x}{1-x} , 1-x+\alpha'(1-x)^2\right)  & \mbox{if } x<\frac{1}{2}, \\
\left(2-\frac{1}{x},\alpha'x^2-x\right)   & \mbox{if } x>\frac{1}{2}.
\end{cases}
\end{array}
\]
and this map has jacobian 1.

We can describe more precisely the surface $\Sigma$ by (see Figure~\ref{fig:FareyNatExt}):
\begin{align*}
    \Sigma  
&= \{(x,\alpha',\tau,e)\mid \tau=0, e=1, \alpha'+\beta>0,\beta>0, 
x\alpha' + \beta=1\}\\
&= \{(x,\alpha',0,1)\mid \alpha'+1-x\alpha'>0,1-x\alpha'>0\}\\
&= \{(x,\alpha',0,1)\mid \frac{1}{x-1}<\alpha'<\frac{1}{x}\}
\end{align*}
Therefore the invariant density of $f$ is
\[
\delta(x)=
\int
_{\frac{1}{x-1}}
^{\frac{1}{x}}
1\, d\alpha'
=
\frac{1}{x}
- \frac{1}{x-1}
=\frac{1}{x(1-x)}.
\]
\begin{figure}[h]
\begin{center}
\includegraphics{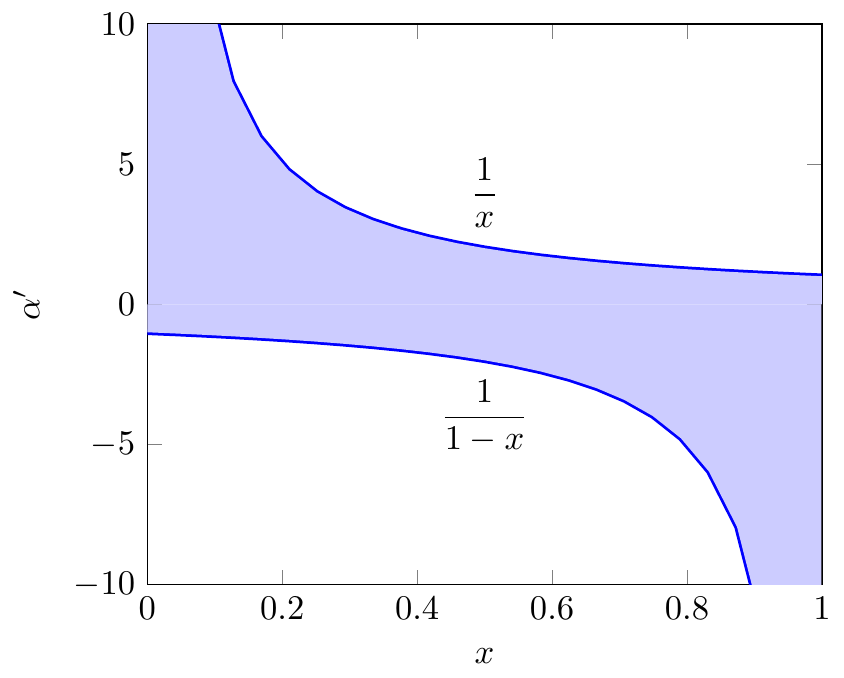}
\end{center}
\caption{
The domain of the natural extension of the unsorted Farey map.
}
\label{fig:FareyNatExt}
\end{figure}

One easily check that this density is invariant; a straightforward computation shows that, if $y_1=\frac {x}{1+x}$ and $y_2=\frac 1{2-x}$ are the preimages of the point $x$, we have $\delta(x)=\frac{\delta(y_1)}{f'(y_1)}+\frac{\delta(y_2)}{f'(y_2)}$.

\begin{remark}
That density is unbounded; moreover, the total mass of the measure is infinite. The reason is that we have two indifferent fixed points at both extremities of the interval, because we have used the additive algorithm. We could  accelerate the map around the fixed points by using the multiplicative algorithm, and recover the classical finite Gauss measure (more exactly, a variant of it, since we use the unsorted map).
\end{remark}

\begin{remark}
There is one more structure: the map $\widetilde F$ and the flow $\varphi_t$ leave invariant the symplectic form $dx\wedge d\alpha+dy\wedge d\beta$, and the flow $\varphi_t$ is the hamiltonian flow associated with the hamiltonian $x\alpha+y\beta$. In fact, the set $D/\widetilde F$ can be identified to  the tangent bundle of the modular surface, given in coordinates, and $\varphi_t$ is the geodesic flow. 
\end{remark}

\section{Constructing the measure from the natural
extension}\label{sec:constructing}

One can extend the method of the previous section to a large range of continued fraction algorithm. We recall the method proposed by Arnoux and Nogueira \cite{MR1251147} to
find heuristics for a geometric model of the natural extension of a multidimensional continued fraction algorithm, and an explicit formula for the invariant density (Gauss measure) in some cases.

Let $\Lambda$ be a subcone of the positive cone $\R^d_+$  (Two interesting cases are when $\Lambda=\R^d_+$ and when $\Lambda=\{(x_1,x_2,\ldots,x_d)\in\R^d\mid 0<x_1<x_2<\ldots<x_d\}$
consists of sorted entries). A \emph{Multidimensional Continued Fraction (MCF) algorithm} is a function
\[
\begin{array}{rrcl}
F: & \Lambda & \to     & \Lambda\\
   & \bx     & \mapsto & M(\bx)^{-1}\cdot\bx.
\end{array}
\]
where $M$ is an homogeneous  function of degree $0$,  piecewise constant on subcones,  that associates to each $\bx\in\Lambda$ an invertible matrix $M(\bx)$.  Classical references for MCF algorithms are \cite{schweiger,BRENTJES}. In the cases we consider, the entries of $M(\bx)$ are nonnegative integers.

To the function $F$, we can associate its projective version $f$, defined in a canonical way from the piecewise linear map on the projective space $P(\Lambda)$. For explicit computation, one should take coordinates, and a nice way to do that is to consider the codimension 1 compact domain  in $\Lambda$, $\Delta = \{ \bx\in\Lambda \mid \Vert\bx\Vert=1\}$ for some
norm $\Vert\cdot\Vert$; it is explicitly given by:
\[
\begin{array}{rrcl}
f: & \Delta  & \to     & \Delta\\
& \bx     & \mapsto & \frac{F(\bx)}{\Vert F(\bx)\Vert}.
\end{array}
\]

\begin{assumption}
We will always suppose that, out of a set of measure 0, we have $\|F(x)\|<\|x\|$; this is the case in all our examples.
\end{assumption}

Even if there is no canonical coordinates on $P(\Lambda)$, and hence no canonical Lebesgue measure, there is a natural Lebesgue measure class, since all changes of coordinates are piecewise projective. In many cases, there is a unique invariant (up to a constant) measure in this class, the Gauss measure, given by an invariant density. We are interested by explicit formulas for this invariant density, and for this, we try to build a geometric model for the natural extension of $f$ with good properties. 

A potential geometric model for the \emph{natural extension} $\widetilde F$ of the piecewise linear function $F$ can be defined as:
\[
\begin{array}{rccl}
\widetilde F: & \Lambda\times\R_+^{d}   & \to & \Lambda\times\R_+^{d} \\
	  &
\left(\begin{array}{r}
\bx \\
\ba
\end{array}\right)
& \mapsto   &
\left(\begin{array}{cc}
M(\bx)^{-1} & 0 \\
0          & M(\bx)^\top
\end{array}\right)
\left(\begin{array}{r}
\bx \\
\ba
\end{array}\right)
\end{array}
\]
To normalize the vector $\bx$ after the application of $\widetilde F$, we  define  the flow $\varphi_t$ as
\[
\begin{array}{rccl}
\varphi_t: & \Lambda\times\R_+^{d}   & \to & \Lambda\times\R_+^{d} \\
    &
\left(\begin{array}{r}
\bx \\
\ba
\end{array}\right)
& \mapsto   &
\left(\begin{array}{cc}
e^t & 0 \\
    0          & e^{-t}
\end{array}\right)
\left(\begin{array}{r}
\bx \\
\ba
\end{array}\right)
\end{array}
\]
for all $t\in\R$.
Note that the following properties are verified:
\begin{itemize}
\item $\varphi_t$ and $\widetilde F$ are well defined, since the matrix $M(x)$ is nonnegative and the definition set is a product of cones;
\item $\varphi_t$ and $\widetilde F$ both have jacobian $1$;
\item $\varphi_t$ and $\widetilde F$ both preserve the scalar product $\langle\bx,\ba\rangle$;
\item $\varphi_t$ commutes with $\widetilde F$: $\varphi_t\circ\widetilde
F = \widetilde F\circ\varphi_t$.
\end{itemize}

\begin{remark} The map $\widetilde F$ has no reason to be a bijection, and the main problem is to find a conical domain $D\subset \Lambda\times\R_+^{d} $ such that $\widetilde F$  is a bijection on $D$. 
\end{remark}

{\em From now on, we suppose that we have found such a domain $D$ of positive measure.} Then  $\varphi_t$ and $\widetilde F$ preserve Lebesgue measure on $D$, since they are bijections with jacobian 1. But this measure is not  very useful, because not only it has infinite mass, but its projection to $\Lambda$ has infinite density. We want to restrict the domain.

\begin{definition}
We define $D_1= \{(\bx,\ba)\in D|\langle\bx,\ba\rangle=1\}$ 
\end{definition}

We can define a natural Lebesgue measure on $D_1$ by considering coordinates on $D_1$ complemented by $\langle\bx,\ba\rangle$ to get a global coordinate system on $D$, and disintegrating Lebesgue measure. The functions $\widetilde F$ and $\varphi_t$ are well-defined on $D_1$ and both preserve locally Lebesgue measure on $D_1$. Since they are bijective, they also preserve it globally. 

\begin{figure}[h]
\begin{center}
    \includegraphics{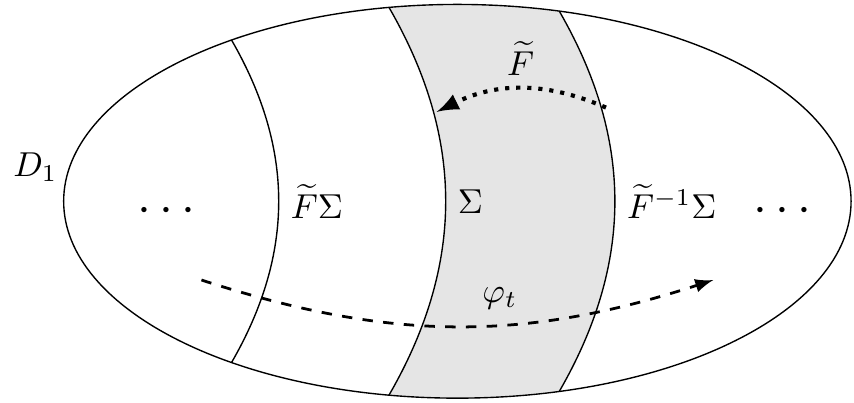}
\end{center}
\caption{The function $\widetilde F$ applied iteratively on the section
$\Sigma\subset D_1$. The gray region corresponds to a fundamental
domain of the action of $\widetilde F$ on $D_1$ and can be identified to $\Omega$.
}
\label{fig:patate}
\end{figure}

\begin{definition}
We define $\Omega=D_1/\widetilde F$, the set of orbits of the map $\widetilde F$.
\end{definition}

Since $\varphi_t$ and $\widetilde F$ commute, we can make $\varphi_t$ act on the quotient space $\Omega$. We can assimilate $\Omega$ to a fundamental domain of
the action of $\widetilde F$ on $D_1$ (this is well defined, since, except for the fixed points of $F$ which have measure 0,  $\widetilde F$ decreases the norm of $\bf x$).

\begin{definition}
We define $\Sigma= \{(\bx,\ba)\in D_1 \mid \Vert\bx\Vert=1\}$, and denote by $\Sigmab$ its projection to $\Omega$.
\end{definition}

\begin{remark}
There is a canonical projection $\pi: D_1\to D_1/\widetilde F$, mapping
each point on its orbit under $\widetilde F$. 
The restriction $\pi: 
\Sigma\to \Sigmab$ is a bijection, up to a set of measure 0: it is onto since
we have $\Sigmab=\pi(\Sigma)$ by definition, and if it is not one-to-one, there are two distinct elements
$(\bx,\ba),(\by,\bb)\in \Sigma$ and an integer $n> 0$ such that $(\by,\bb)=\widetilde F^n (\bx,\ba)$; this implies that $\|\by\|=\|F^n(\bx)\|=\|\bx\|$, which can only happen on a set of measure 0 by assumption. The dynamics happen in the quotient set $\Omega$, and we are mainly interested in the first return map to $\Sigmab$, but
all explicit calculus must be done in  $D_1$ and $\Sigma$ instead of $\Omega$ and $\Sigmab$.
\end{remark}

\begin{remark}
The surface $\Sigma$ is a {\em section} for the flow $\varphi_t$: for any
$(\bx,\ba)\in D_1$, there is a unique $t_{\bx}=-\log(\Vert\bx\Vert)$, depending only
on $\bx$,  such that $\varphi_{t_{\bx}}(\bx,\ba)\in \Sigma$.
\end{remark}

We can define a fundamental domain for the action of $\widetilde F$ as
\[
    \{ \varphi_t(\bx,\ba) \mid (\bx,\ba)\in\Sigma, 0\leq t < t_{F(\bx)}\}
\]
(see Figure~\ref{fig:patate}).
We consider the first return map   $\widetilde f:\Sigmab\to\Sigmab$ of the flow on $\Omega$. This map can
be seen in the fundamental domain as the composition of $\widetilde F$ with
the flow: $\widetilde f(\bx,\ba)= \varphi_{t_{F(\bx)}}\circ \widetilde F(\bx,\ba)$.

Since $\widetilde F$ preserves Lebesgue measure on $D_1$, there is a well-defined Lebesgue measure on $\Omega$, which is preserved by $\varphi_t$; since $\Sigmab$ is a section, we can define a transverse invariant measure $\mu$ on $\Sigmab$ in the usual way: if $m$ is Lebesgue measure on $\Omega$ and $U\subset\Sigmab$ is a measurable set, we define 
$$\mu(U)=\lim_{t\to 0}\frac{m\{\varphi_s(u)|u\in U, 0\le s\le t\}}{t}$$

From the definition, we have $\widetilde f(\bx,\ba) = (f(\bx), M(\bx)^\top\ba\cdot\Vert F(\bx)\Vert)$,  and $\widetilde f$ can be projected on the application $f$.  Hence we can find an invariant measure  for $f$ equivalent to Lebesgue measure  by partial integration on the section $\Sigmab$.

\begin{proposition} \label{prop:arnouxnogueira}{\rm\cite{MR1251147}}
If the measure of $\Sigma$ is positive, by choosing a coordinate system $(\bx, \ba)$ in which the invariant measure  is written $d\bx\, d\ba$, the invariant measure of $f$ can
be computed by the formula:
    \[
	\displaystyle
	\delta(\bx) = \int_{\{\ba: (\bx,\ba)\in\Sigma\}} 1 \,d\ba
    \]
\end{proposition}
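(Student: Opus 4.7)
The plan is to transport the invariance of Lebesgue measure through the chain $D \supset D_1 \supset \Sigma$, descend to the quotient $\Omega = D_1/\widetilde F$, and then push forward along the coordinate projection $(\bx,\ba)\mapsto \bx$ to read off the density of the invariant measure for $f$.

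First I would assemble the invariance properties already established in Section~\ref{sec:constructing}: (i) Lebesgue measure on $D$ is preserved by $\widetilde F$ and by $\varphi_t$, since both maps have Jacobian $1$; (ii) because both maps preserve the scalar product $\langle \bx,\ba\rangle$, the disintegration of Lebesgue measure along the function $(\bx,\ba)\mapsto \langle\bx,\ba\rangle$ yields a well-defined $\widetilde F$- and $\varphi_t$-invariant Lebesgue measure on $D_1$; (iii) since $\widetilde F$ and $\varphi_t$ commute, the flow descends to $\Omega$ and preserves the quotient measure.

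Next, I would use that $\overline\Sigma$ is a global section for $\varphi_t$ on $\Omega$ (with return time $t_\bx = -\log\Vert\bx\Vert$) to define the transverse invariant measure $\mu$ on $\overline\Sigma$ by Rokhlin disintegration: locally $dm = d\mu\, dt$, where $t$ is the flow parameter. Invariance of $\mu$ under the first return map $\widetilde f$ follows from the invariance of $m$ under $\varphi_t$ and the fact that $\widetilde f = \varphi_{t_{F(\bx)}}\circ \widetilde F$ sends $\overline\Sigma$ bijectively to itself. Under the hypothesis of the proposition, a coordinate system $(\bx,\ba)$ on $\Sigma$ has been chosen in which $d\mu$ reads as the product $d\bx\, d\ba$; so in these coordinates $\widetilde f$ preserves $d\bx\, d\ba$.

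Finally, the formula $\widetilde f(\bx,\ba) = (f(\bx),\Vert F(\bx)\Vert M(\bx)^\top \ba)$ shows that the projection $\pi_\bx : \overline\Sigma \to \Delta$, $(\bx,\ba)\mapsto \bx$, semiconjugates $\widetilde f$ to $f$. Pushing $\mu$ forward along $\pi_\bx$ therefore produces an $f$-invariant measure on $\Delta$, and Fubini identifies its density with the fiber integral
\[
\delta(\bx) = \int_{\{\ba \,:\, (\bx,\ba)\in\Sigma\}} 1\, d\ba,
\]
which is exactly the claimed formula.

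The main obstacle I expect is not the projection step, which is essentially Fubini, but justifying that in some natural coordinate system the transverse measure $\mu$ really is $d\bx\, d\ba$. In the Farey example this was checked by hand via the change of variables $(x,y,\alpha,\beta)\mapsto (x,\alpha',\tau,e)$ whose Jacobian is $1$, so that the section measure agrees with $dx\, d\alpha'$. In general one must exhibit coordinates adapted simultaneously to the level set $\langle \bx,\ba\rangle = 1$ and to the flow direction $t=-\log\Vert\bx\Vert$, and verify that the ambient Lebesgue measure disintegrates as $d\bx\, d\ba\, dt\, de$; this is the computation hidden in the phrase ``choosing a coordinate system in which the invariant measure is written $d\bx\, d\ba$'' and has to be carried out case by case in the applications that follow.
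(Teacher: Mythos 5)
Your proposal is correct and follows essentially the same route that the paper sketches around Proposition~\ref{prop:arnouxnogueira}: pass Lebesgue measure from $D$ to $D_1$ by disintegrating along $\langle\bx,\ba\rangle$, descend to $\Omega$ using commutativity of $\widetilde F$ and $\varphi_t$, read off the transverse invariant measure $\mu$ on the section $\overline\Sigma$ from $dm=d\mu\,dt$, note that $\pi_\bx:(\bx,\ba)\mapsto\bx$ semiconjugates $\widetilde f$ to $f$ because $\widetilde f(\bx,\ba)=(f(\bx),\Vert F(\bx)\Vert M(\bx)^\top\ba)$, and push $\mu$ forward via Fubini. You also correctly isolate the one nontrivial point, namely exhibiting coordinates $(\by,\bb,\tau,e)$ with Jacobian~$1$ adapted to $\langle\bx,\ba\rangle$ and to the flow time so that $\mu=\prod dy_i\prod db_i$; this is precisely the explicit change of variables the paper gives immediately after stating the proposition (the paper itself defers the full proof to the reference \cite{MR1251147} rather than writing it out).
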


We now describe an explicit change of variables that is useful to compute explicitly all these measures in the case of the norm $\|\bx\|=\sum_{i=1}^d |x_i|$. Define $y_i=x_i$ for $i<d$, and $\tau=-\log(\sum_{i=1}^d |x_i|)$; define $b_i=a_i-a_d$ for $i<d$ and $e=\sum_{i=1}^d a_i x_i$. A straightforward computation shows that the change of coordinates $(\bx, \ba)\mapsto (\by, \bb, \tau, e)$ has jacobian 1. The domain $D_1$ is defined by $e=1$, so the invariant measure on $\Omega$ is given by $\prod_{i=1}^{d-1}dy_i\prod_{i=1}^{d-1} db_i\,d\tau$; the section $\Sigma$ is defined by $\tau=1$; since the coordinate $\tau$ is, up to a sign, the time coordinate of the flow, the invariant transverse measure for the flow, that is, the invariant measure for $\widetilde f$, is given explicitly by $\prod_{i=1}^{d-1}dy_i\prod_{i=1}^{d-1} db_i$.

\begin{remark} We have made arbitrary choices. The first one was the choice of a coordinates for the projective space $P(\Lambda)$; we could choose another section, for example $x_d=1$ (we could also take another norm, but it seems in general more convenient to choose an affine section). The second one, which is related, is the choice of the section $\Sigma$. The flow $\varphi_t$ and the space $\Omega$ are more intrinsic; different choices lead to different forms of the same continued fraction; this explains why we can find a large range of formulas for apparently similar continued fractions, based on a different choice of coordinates.
\end{remark}

\section{ The Reverse algorithm}\label{sec:reverse}

The Arnoux-Rauzy algorithm is a partial algorithm, defined on the positive cone by subtracting the two smallest coordinates from the largest one. It is only defined if the largest coordinate is larger than the sum of the two smaller coordinates, that is, on the complement of the set of coordinates  which satisfy the triangular inequality. The set of points with infinite orbits, known as the Rauzy gasket \cite{2013_arnoux_gasket}, has measure 0, which make it unusable to find rational approximations of most points in the positive cone.

It can be completed in various ways by defining it on this missing set; for example, one can consider the simplest map which sends the central cone onto the positive cone, $(x,y,z)\mapsto (-x+y+z, x-y+z,x+y-z)$.

More formally, we can use the notations of the previous section, and define Reverse algorithm on the following partition of
$\Lambda = \{ (x,y,z) \in \R^3 \mid x>0, y>0, z>0\}$ up to a set of measure
zero:
\begin{align*}
\Lambda_1 &= \{(x,y,z) \in\Lambda \mid   x>y+z      \}, \\
\Lambda_2 &= \{(x,y,z) \in\Lambda \mid   y>x+z      \}, \\
\Lambda_3 &= \{(x,y,z) \in\Lambda \mid   z>x+y      \}, \\
\Lambda_4 &= \{(x,y,z) \in\Lambda \mid x<y+z, y<x+z, z<x+y \}
\end{align*}

The name {\em Reverse} comes from the fact that its effect, as a projective map acting on the central triangle $\Lambda_4$ of the unit simplex, is an homothety of ratio 2, followed by a central symmetry which reverses the vector with respect to the center of the simplex.

\begin{figure}[h]
\begin{center}
    \includegraphics[height=6cm]{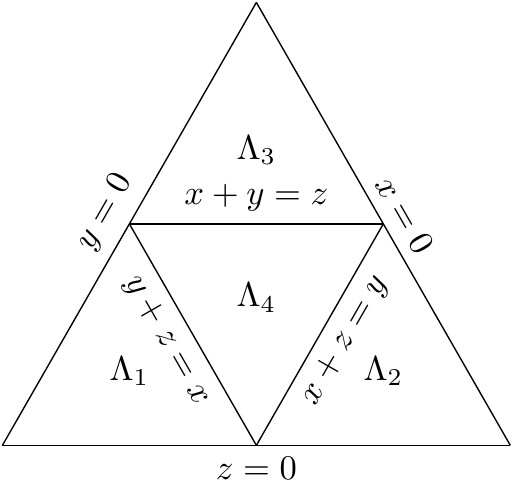}
    \includegraphics[height=6cm]{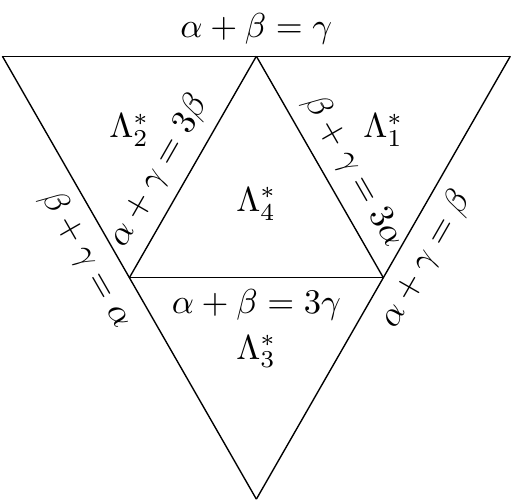}
\end{center}
\caption{Left: the trace on the unit simplex of the partition of $\Lambda$ into $\Lambda_i$ for
    $i\in\{1,2,3,4\}$.
Right: the partition of $\Lambda^*$ into $\Lambda^*_i$ for $i\in\{1,2,3,4\}$.}
\label{fig:lambdailambdaistar}
\end{figure}

We define the four matrices:
\[
    A_1 =
\left(\begin{array}{rrr}
1 & 1 & 1 \\
0 & 1 & 0 \\
0 & 0 & 1
\end{array}\right),
    A_2 =
\left(\begin{array}{rrr}
1 & 0 & 0 \\
1 & 1 & 1 \\
0 & 0 & 1
\end{array}\right),
    A_3 =
\left(\begin{array}{rrr}
1 & 0 & 0 \\
0 & 1 & 0 \\
1 & 1 & 1
\end{array}\right),
    A_4 =
\left(\begin{array}{rrr}
0 & \frac{1}{2} & \frac{1}{2} \\
\frac{1}{2} & 0 & \frac{1}{2} \\
\frac{1}{2} & \frac{1}{2} & 0
\end{array}\right).
\]
and the matrix function $M:\Lambda\to GL(3,\Q)$  such that $M (\bx)=  A_i$  if
and only $\bx\in\Lambda_i$. Recall that this matrix function defines the functions 
$F(\bx)=M(\bx)^{-1}\bx$, 
$f(\bx)=F(\bx)/\Vert F(\bx)\Vert_1$ and 
$\widetilde F(\bx,\ba)=(M(\bx)^{-1}\bx, M(\bx)^\top\ba)$. 

We show in the left of figure \ref{fig:lambdailambdaistar} the trace on the unit simplex of the partition of $\Lambda$; each branch of the map $F$ sends $\Lambda_i$ to all of $\Lambda$.

Explicitly, the function $\widetilde F$ is given by 
\[
\widetilde F:   (x,y,z,\alpha,\beta,\gamma)
 \mapsto   
\begin{cases}
(x-y-z,y,z,\alpha,\alpha+\beta,\alpha+\gamma)   & \mbox{if } (x,y,z)\in\Lambda_1,\\
(x,y-x-z,z,\alpha+\beta,\beta,\beta+\gamma)   & \mbox{if } (x,y,z)\in\Lambda_2,\\
(x,y,z-x-y,\alpha+\gamma,\beta+\gamma,\gamma)   & \mbox{if } (x,y,z)\in\Lambda_3,\\
(-x+y+z,x-y+z,x+y-z,\frac{\beta+\gamma}{2},\frac{\alpha+\gamma}{2},\frac{\alpha
+\beta}{2}) & \mbox{if } (x,y,z)\in\Lambda_4.
\end{cases}
\]

One can easily find a cone $\Lambda^*$ such that $\widetilde F$ is a bijection on $D= \Lambda\times\Lambda^*$: numerical experimentations show that after few iterations of  $\widetilde F$, $(\alpha,\beta, \gamma)$  belongs to the subset of $\Lambda_4\subset \Lambda$ of triples which satisfy the triangular inequality. Indeed, for any $(\alpha,\beta, \gamma)\in \Lambda$,  $(\frac{\beta+\gamma}{2},\frac{\alpha+\gamma}{2},\frac{\alpha+\beta}{2})$ satisfies the triangular inequality; and if  $(\alpha,\beta, \gamma)\in \Lambda_4$, then  $(\alpha,\alpha+\beta, \alpha+\gamma)$ also satisfies the triangular inequality. Since 
almost all orbits of $F$ cross $\Lambda_4$, with probability 1, after a finite time, the second part of the coordinates of $\widetilde F^n(\bx,\ba)$ enters $\Lambda_4$ and never escapes. 

Hence we define $\Lambda^* = \Lambda_4$, and a partition of $\Lambda^*$ by 
\begin{align*}
\Lambda^*_1 &= \{(\alpha,\beta,\gamma)\in \Lambda^* \mid \alpha<\frac{\alpha+\beta+\gamma}4 \},\\
\Lambda^*_2 &= \{(\alpha,\beta,\gamma)\in \Lambda^* \mid \beta<\frac{\alpha+\beta+\gamma}4 \},\\
\Lambda^*_3 &= \{(\alpha,\beta,\gamma)\in \Lambda^* \mid \gamma<\frac{\alpha+\beta+\gamma}4 \},\\
\Lambda^*_4 &= \{(\alpha,\beta,\gamma)\in \Lambda^*
\mid
\alpha,\beta, \gamma >\frac{\alpha+\beta+\gamma}4\}.
\end{align*}

\begin{lemma}
$\{\Lambda^*_1, \Lambda^*_2, \Lambda^*_3, \Lambda^*_4\}$ is a partition of $\Lambda^*$, up to a set of measure 0.
\end{lemma}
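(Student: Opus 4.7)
My plan is to prove disjointness and (up-to-null-set) covering separately, using the triangular inequality that characterizes $\Lambda^*=\Lambda_4$ as the only nontrivial ingredient.

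\textbf{Disjointness.} Write $s=\alpha+\beta+\gamma$. The inclusions $\Lambda^*_i\subseteq\Lambda^*$ for $i=1,\dots,4$ are built into the definitions, and $\Lambda^*_4\cap\Lambda^*_i=\emptyset$ for $i\in\{1,2,3\}$ because the condition on the $i$-th coordinate is strict and opposite. The only thing to check is $\Lambda^*_i\cap\Lambda^*_j=\emptyset$ for distinct $i,j\in\{1,2,3\}$. By symmetry, take $i=1$, $j=2$: if both $\alpha<s/4$ and $\beta<s/4$, then $\alpha+\beta<s/2$, so $\gamma=s-(\alpha+\beta)>s/2>\alpha+\beta$. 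This contradicts the defining triangular inequality $\gamma<\alpha+\beta$ of $\Lambda^*=\Lambda_4$.

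\textbf{Covering up to measure zero.} Pick $(\alpha,\beta,\gamma)\in\Lambda^*$. If $\alpha,\beta,\gamma>s/4$, the point lies in $\Lambda^*_4$. Otherwise at least one coordinate is $\le s/4$; if that inequality is strict, the point lies in the corresponding $\Lambda^*_i$. The remaining points, where some coordinate equals $s/4$ exactly, lie in the union of the three affine hyperplanes $\{\alpha=s/4\}$, $\{\beta=s/4\}$, $\{\gamma=s/4\}$, which is a set of three-dimensional Lebesgue measure zero in the cone $\Lambda$.

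\textbf{Conclusion and expected difficulty.} Combining both points, the four sets $\Lambda^*_1,\Lambda^*_2,\Lambda^*_3,\Lambda^*_4$ are pairwise disjoint (not only up to measure zero, in fact strictly) and their union is $\Lambda^*$ up to a set of Lebesgue measure zero. I do not anticipate any substantive obstacle: the entire argument is a short consequence of the strict triangular inequality defining $\Lambda_4$. The one mildly delicate point is simply to remember that ``partition up to measure zero'' refers to the boundary hyperplanes missed by the strict inequalities, not to any overlap between the pieces.
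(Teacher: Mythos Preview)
Your proof is correct and follows essentially the same route as the paper's: disjointness of $\Lambda^*_4$ from the others by opposite strict inequalities, disjointness among $\Lambda^*_1,\Lambda^*_2,\Lambda^*_3$ via the observation that two coordinates below $s/4$ would force the third above $s/2$ and violate the triangular inequality, and covering up to the three codimension-one hyperplanes where a coordinate equals $s/4$. The only cosmetic difference is that the paper makes explicit, in the covering step, that \emph{at most one} coordinate can be strictly below $s/4$ (invoking the disjointness argument), whereas you leave this implicit; but your disjointness paragraph already supplies that fact.
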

\begin{proof}
It is clear from the definition that $\Lambda^*_4$ is disjoint from the other 3, since it is defined by opposite inequalities. Consider a point in the intersection of $\Lambda^*_1$ and $\Lambda^*_2$; we have $\alpha, \beta<\frac{\alpha+\beta+\gamma}4$, hence $\gamma>\frac{\alpha+\beta+\gamma}2$, which is incompatible with the triangular inequality. The same proof is valid for the two other intersections, hence these four sets are disjoint.

The equality $ \alpha=\frac{\alpha+\beta+\gamma}4$ defines a set of
codimension 1 and measure 0, and similarly for the two other ones. For any
other point in $\Lambda^*$, either all of $\alpha,\beta, \gamma$ are larger
than $\frac{\alpha+\beta+\gamma}4$, which defines $\lambda^*_4$, or one (and
only one, by the previous paragraph) is  smaller,  which defines one of the
other simplices; hence they form a partition, up to a set of measure 0.  We
show in the right of figure \ref{fig:lambdailambdaistar} the trace on the unit simplex of this partition.
\end{proof}

As an immediate consequence, we obtain a partition of $D$: 

\begin{corollary} Up to a set of measure 0, we can write the set $D=\Lambda\times\Lambda^*$ as a disjoint union 
$$D=\coprod_{i=1}^4 \Lambda_i\times\Lambda^*=\coprod_{i=1}^4\Lambda\times\Lambda^*_i $$
\end{corollary}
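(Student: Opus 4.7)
The plan is to observe that this corollary is an immediate consequence of two partition statements already in hand. First, the sets $\Lambda_1,\Lambda_2,\Lambda_3,\Lambda_4$ were defined in Section~\ref{sec:reverse} by strict inequalities whose pairwise intersections lie on codimension-one hyperplanes (e.g. $x=y+z$), and whose union exhausts $\Lambda$ outside those hyperplanes; so $\{\Lambda_1,\Lambda_2,\Lambda_3,\Lambda_4\}$ is a partition of $\Lambda$ up to a measure-zero set $N\subset\Lambda$. Second, the lemma just proven gives exactly the analogous statement for $\Lambda^*$ with an exceptional set $N^*\subset\Lambda^*$.

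From these two facts the corollary follows by the trivial principle that if $\{S_i\}$ partitions $S$ up to a null set $N$, then $\{S_i\times T\}$ partitions $S\times T$ up to the null set $N\times T$ (which is null in the product measure by Fubini, since $T$ is $\sigma$-finite). Applying this with $S=\Lambda$ and $T=\Lambda^*$ yields the decomposition
\[
D=\Lambda\times\Lambda^*=\coprod_{i=1}^4 \Lambda_i\times\Lambda^*,
\]
while swapping the roles of the two factors (i.e.\ $S=\Lambda^*$, $T=\Lambda$) and using the lemma gives
\[
D=\coprod_{i=1}^4 \Lambda\times\Lambda^*_i.
\]

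I do not anticipate any real obstacle: the statement is pure set-theoretic bookkeeping, merely recording the two product partitions of $D$ that will be needed in what follows. Its role is to set up the description of the branches of $\widetilde F$, where the first partition indexes the four branches according to the location of $\bx$ \emph{before} applying $F$, and the second will be used to verify that the images of these branches tile $D$ according to the location of $\ba$ \emph{after} the application of $\widetilde F$, thereby establishing bijectivity on $D$.
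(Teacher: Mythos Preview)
Your proposal is correct and matches the paper's approach exactly: the paper simply states this corollary as ``an immediate consequence'' of the preceding lemma (the partition of $\Lambda^*$) together with the already-given partition of $\Lambda$, without further argument. Your write-up is just a careful unpacking of that immediate consequence, including the Fubini remark to justify nullity of the exceptional product set.
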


Hence, to prove that $\widetilde F$ is a bijection on $D$, since each branch of $\widetilde F$ is a non-degenerate linear map, it suffices to prove:

\begin{lemma}\label{lem:subsetT}
 We have
     $\widetilde F(\Lambda_i\times\Lambda^*)= \Lambda\times\Lambda^*_i$.
\end{lemma}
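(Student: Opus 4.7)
The plan is to reduce the statement to a simple product decomposition and then verify the four cases. Since on each piece $\Lambda_i \times \Lambda^*$ the map $\widetilde F$ acts as a product $(F|_{\Lambda_i}) \times (A_i^\top)$, and since the excerpt has already noted that each branch of $F$ sends $\Lambda_i$ bijectively onto $\Lambda$, it suffices to prove, for each $i\in\{1,2,3,4\}$, that the linear map $A_i^\top$ is a bijection from $\Lambda^*$ onto $\Lambda^*_i$. Each $A_i$ is invertible, so bijectivity on the images will follow from showing $A_i^\top(\Lambda^*)\subseteq \Lambda^*_i$ together with $\Lambda^*_i \subseteq A_i^\top(\Lambda^*)$.

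For $i=1$ the dual map is $(\alpha,\beta,\gamma)\mapsto(\alpha',\beta',\gamma')=(\alpha,\alpha+\beta,\alpha+\gamma)$. I would first compute the total sum $\alpha'+\beta'+\gamma'=3\alpha+\beta+\gamma$ and observe that the defining inequality of $\Lambda^*_1$, namely $4\alpha'<\alpha'+\beta'+\gamma'$, reduces to the triangular inequality $\alpha<\beta+\gamma$, which is one of the defining inequalities of $\Lambda^*=\Lambda_4$. The two remaining triangular inequalities needed to place the image in $\Lambda^*$ (namely $\beta'<\alpha'+\gamma'$ and $\gamma'<\alpha'+\beta'$) translate under the same substitution to $\beta<\alpha+\gamma$ and $\gamma<\alpha+\beta$, which are again triangular inequalities on $\ba$. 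Positivity is immediate. For the converse inclusion, given $(\alpha',\beta',\gamma')\in\Lambda^*_1$, I solve the linear system to obtain $\alpha=\alpha'$, $\beta=\beta'-\alpha'$, $\gamma=\gamma'-\alpha'$; the strict inequality $3\alpha'<\beta'+\gamma'$ combined with the triangular inequalities on $(\alpha',\beta',\gamma')$ forces both $\beta'>\alpha'$ and $\gamma'>\alpha'$ (since $3\alpha'<\beta'+\gamma'<\alpha'+2\beta'$ gives $\alpha'<\beta'$, and symmetrically), so the preimage has positive coordinates; the three triangular inequalities for the preimage are likewise equivalent to the defining inequalities of $\Lambda^*_1$, completing the case. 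The cases $i=2,3$ are identical modulo a permutation of coordinates, so I would simply state that they follow by symmetry.

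For $i=4$ the dual map is $(\alpha,\beta,\gamma)\mapsto\bigl(\tfrac{\beta+\gamma}{2},\tfrac{\alpha+\gamma}{2},\tfrac{\alpha+\beta}{2}\bigr)$, which is an involution up to a scaling (the sum is preserved: $\alpha'+\beta'+\gamma'=\alpha+\beta+\gamma$). For the forward inclusion, the excerpt already notes that the image automatically satisfies the triangular inequalities, so the image lies in $\Lambda_4=\Lambda^*$; the inequality $4\alpha'>\alpha'+\beta'+\gamma'$ becomes $\beta+\gamma>\alpha$, which holds on $\Lambda^*$, and the same for the other two coordinates, placing the image in $\Lambda^*_4$. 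For the converse, I invert using $\alpha=-\alpha'+\beta'+\gamma'$, $\beta=\alpha'-\beta'+\gamma'$, $\gamma=\alpha'+\beta'-\gamma'$; positivity of $\alpha,\beta,\gamma$ corresponds exactly to the triangular inequalities on $(\alpha',\beta',\gamma')\in\Lambda^*$, and the three triangular inequalities needed to place the preimage in $\Lambda^*$ translate to $\beta'+\gamma'<3\alpha'$ and its two analogues, which are precisely the defining inequalities of $\Lambda^*_4$.

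No step is genuinely hard; the main care needed is bookkeeping, especially for branch $4$, where both the positivity of the preimage and its membership in $\Lambda^*$ must be checked, and both end up being equivalent (via symmetric linear identities) to the two systems of inequalities that together define $\Lambda^*_4$ inside $\R^3_+$. Once the four cases are in place, combining them with the bijectivity of $F|_{\Lambda_i}$ onto $\Lambda$ yields the claimed equality $\widetilde F(\Lambda_i\times\Lambda^*)=\Lambda\times\Lambda^*_i$.
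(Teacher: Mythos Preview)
Your proof is correct and follows essentially the same approach as the paper: reduce to showing $A_i^\top(\Lambda^*)=\Lambda^*_i$, verify the forward inclusion by the same computation (for $i=1$, the inequality $\alpha'+\beta'+\gamma'=3\alpha+\beta+\gamma>4\alpha=4\alpha'$; for $i=4$, $\alpha'+\beta'+\gamma'=\alpha+\beta+\gamma<2(\alpha+\beta)=4\gamma'$), and handle $i=2,3$ by symmetry. The only difference is in establishing the reverse inclusion: the paper dispatches this by checking that $A_i^\top$ maps the extreme rays of the simplicial cone $\Lambda^*$ to the extreme rays of $\Lambda^*_i$, whereas you explicitly invert $A_i^\top$ and verify the inequalities on the preimage. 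Your route is slightly more laborious but entirely self-contained; the paper's extreme-point check is quicker once one accepts that a linear map between open simplicial cones is determined by the images of the generating rays.
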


\begin{proof}
We check directly by computation that $F(\Lambda_i)=\Lambda$. We want to show that if $\bx\in\Lambda_i$, then $A_i^\top\ba\in\Lambda^*_i$.
We use the notation
$(x',y',z',\alpha',\beta',\gamma')=\widetilde F(x,y,z,\alpha,\beta,\gamma)$
and we proceed case by case.

If $\bx\in \Lambda_1$,
then
$ \alpha'+\beta'+\gamma' = 3\alpha+\beta+\gamma > 4\alpha = 4\alpha'$.
Therefore $\widetilde F(x,y,z,\alpha,\beta,\gamma)\in \Lambda\times\Lambda^*_1$.
A similar proof applies for $\Lambda_2$ and $\Lambda_3$.

If $\bx\in \Lambda_4$, then $\alpha'+\beta' +\gamma'=\alpha+\beta+\gamma< 2\alpha+2\beta= 4\gamma'$; a similar proof applies to $\alpha'$ and $\beta'$, which proves that  $ \widetilde F(x,y,z,\alpha,\beta,\gamma)\in \Lambda\times\Lambda^*_4$.

We have proved inclusion; to prove equality, it is enough to show that the matrix $A_i^\top$sends the extreme points of $\Lambda^*$ to the extreme points of $\Lambda^*_i$, which is done by computation.
See Figure~\ref{fig:arnouxrevert}.
\end{proof}

\begin{figure}[h]
\begin{center}
\includegraphics{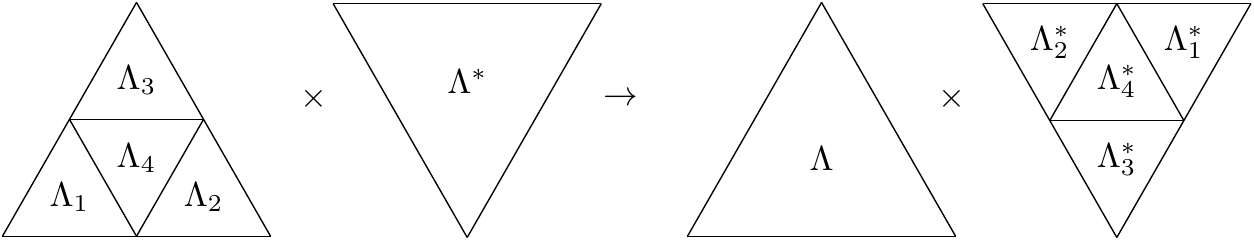}
\end{center}
\caption{
The function $\widetilde F:\Lambda_i\times\Lambda^*\to \Lambda\times\Lambda^*_i$ is
one-to-one and onto
for each $i\in\{1,2,3,4\}$.
}
\label{fig:arnouxrevert}
\end{figure}

We now apply the results of the previous section. Recall that
\[
D_1 = \{(x,y,z,\alpha,\beta,\gamma)\in D
\mid
x\alpha+y\beta+z\gamma=1
\}.
\]
and the surface of section is defined by 
\[
\Sigma=
\{(x,y,z,\alpha,\beta,\gamma)\in\R_+^6\mid
x\alpha+y\beta+z\gamma=1,
x+y+z=1,
\alpha+\beta>\gamma,
\alpha+\gamma>\beta,
\beta+\gamma>\alpha\}.
\]

It is convenient to take the coordinates on $D_1$ defined in the previous section:  
We keep variables $x$ and $y$ and change the coordinates 
$\alpha'=\alpha-\gamma$,
$\beta'=\beta-\gamma$,
$e=x\alpha+y\beta+z\gamma$,
$\tau=-\log(x+y+z)$; it is readily checked that the jacobian is one. 
Since the domain $D_1$ is defined by $e=0$, and coordinate $\tau$ is the return time of the flow to $\Sigmab$, we get
\[
\Sigma  
= \{(x,y,\alpha',\beta',\tau,e)\in\R_+^6\mid
\tau=0,
e=1,
\alpha'+\beta'+\gamma>0,
\alpha'+\gamma>\beta',
\beta'+\gamma>\alpha'\}
\]

Furthermore, the invariant measure for the return map $\widetilde f$ of the flow to $\Sigmab $ is $dx\, dy\, d\alpha'\, d\beta'$. From this, we obtain:

\begin{proposition} \label{prop:invariantmeasureARR}
The density function of the invariant measure of $f:\Delta\to\Delta$ for
the Reverse algorithm is
\[
\frac{1}{(1-x)(1-y)(1-z)}.
\]
\end{proposition}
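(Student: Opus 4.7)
The plan is to apply Proposition \ref{prop:arnouxnogueira} directly, using the coordinates set up just before the statement. After the change of variables $\alpha'=\alpha-\gamma$, $\beta'=\beta-\gamma$, $e=x\alpha+y\beta+z\gamma$, $\tau=-\log(x+y+z)$, integrating out the unconstrained coordinate $\gamma$ reduces to reading off the section $\Sigma$ in the $(x,y,\alpha',\beta')$-variables. On $\Sigma$ we have $z=1-x-y$ from $\tau=0$, and the constraint $e=1$ yields $\gamma=1-x\alpha'-y\beta'$ after expanding $x(\alpha'+\gamma)+y(\beta'+\gamma)+z\gamma=1$. Substituting these into the three triangular inequalities on $(\alpha,\beta,\gamma)$ rewrites them as three linear inequalities in $(\alpha',\beta')$:
\begin{align*}
(1-x)\alpha'+(1-y)\beta' &> -1,\\
(1-x)\alpha'-(1+y)\beta' &> -1,\\
-(1+x)\alpha'+(1-y)\beta' &> -1.
\end{align*}

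By the proposition, the invariant density at $(x,y)$ is the Lebesgue area of the fiber
$R(x,y)=\{(\alpha',\beta')\in\R^2 : \text{the three inequalities above hold}\}$.
The first step is to observe that $(\alpha',\beta')=(0,0)$ satisfies all three strict inequalities, so the origin is interior to $R(x,y)$. A short direction-by-direction check (for instance along $(1,1)$ one has $(1+y)-(1-x)=x+y>0$, violating the second inequality for large $t$) shows the region is bounded. Hence $R(x,y)$ is a triangle, whose three vertices are obtained by pairwise intersection of the three boundary lines.

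Next I carry out the three pairwise intersections. From lines $1$ and $2$ one gets $\beta'=0$ and $\alpha'=-1/(1-x)$; from lines $1$ and $3$, $\alpha'=0$ and $\beta'=-1/(1-y)$; and from lines $2$ and $3$, adding and using $x+y$ as a common factor gives $(\alpha',\beta')=(1/(x+y),1/(x+y))$. Applying the triangle area formula
\[
\mathrm{Area}=\tfrac{1}{2}\bigl|\det(P_{13}-P_{12},\,P_{23}-P_{12})\bigr|
\]
and simplifying (the key algebraic simplification being $1+\frac{1+y}{1-y}=\frac{2}{1-y}$) yields
\[
\mathrm{Area}(R(x,y))=\frac{1}{(1-x)(1-y)(x+y)}.
\]
Finally, using $z=1-x-y$, i.e.\ $x+y=1-z$, gives the density $\tfrac{1}{(1-x)(1-y)(1-z)}$ as claimed.

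The essentially routine parts are the two changes of coordinates and the determinant computation. The main conceptual point to verify carefully—where I would be most cautious—is that the fiber $R(x,y)$ is indeed bounded and equals the convex hull of the three pairwise intersection points, rather than an unbounded polygonal region where one of these ``vertices'' is not actually extreme. Once boundedness and the correct orientation of the three half-planes are established (both verified by testing at the origin and along suitable rays), the rest reduces to the algebra above.
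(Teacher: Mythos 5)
Your proof is correct and follows essentially the same route as the paper's: the same change of variables $(\alpha',\beta',\tau,e)$, the same reduction to the area of the triangle with vertices $\left(\tfrac{1}{x-1},0\right)$, $\left(0,\tfrac{1}{y-1}\right)$, $\left(\tfrac{1}{x+y},\tfrac{1}{x+y}\right)$, and the substitution $x+y=1-z$ at the end. The only cosmetic differences are that you compute the area via the general determinant formula rather than the paper's ad hoc Lemma~\ref{lem:triangle}, and you add a (welcome, but not strictly needed once the three vertices are in hand) check that the fiber is a bounded triangle.
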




\begin{proof} 
To obtain this density and according to
Proposition~\ref{prop:arnouxnogueira}, it suffices to integrate the measure
$d\alpha'\, d\beta'$ for a fixed $x,y$.
Given $(x,y)\in\R^2$, the set of admissible 
$(\alpha',\beta')\in\R^2$ satisfies
\[
\left\{
\begin{array}{l}
x\alpha'+y\beta'+\gamma=1,\\
\alpha'+\beta'+\gamma>0,\\
\alpha'+\gamma>\beta',\\
\beta'+\gamma>\alpha',
\end{array}
\right.
\quad
\text{or equivalently}
\quad
\left\{
\begin{array}{l}
(1-x)\alpha'+(1-y)\beta'+1>0,\\
(1-x)\alpha'+1>(1+y)\beta',\\
(1-y)\beta'+1>(1+x)\alpha'.
\end{array}
\right.
\]
This domain is the interior of a triangle of vertices
$\left(\frac{1}{x-1},0\right)$,
$\left(0,\frac{1}{y-1}\right)$,
$\left(\frac{1}{x+y},\frac{1}{x+y}\right)$
shown below.
\begin{center}
\includegraphics{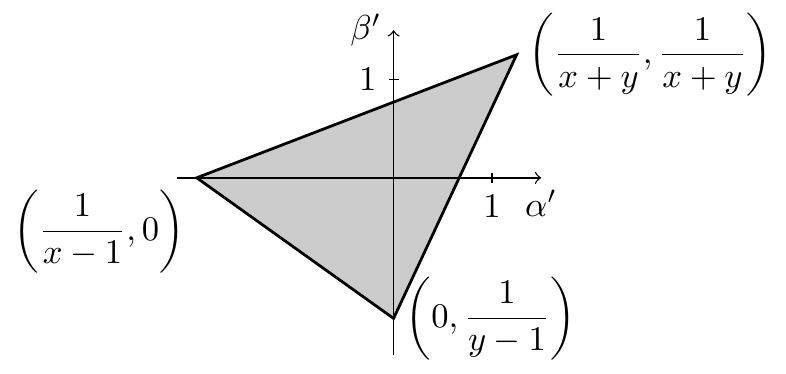}
\end{center}

\begin{lemma}\label{lem:triangle}
Let $a,b,c\in\R$.
The area of a triangle of vertices $(a,0)$, $(0,b)$ and $(c,c)$ is $\frac{1}{2}|ab-bc-ac|=\frac 12 |abc|\left|\frac 1c-\frac 1a-\frac 1b\right|$.
\end{lemma}

This lemma is proved by an easy determinant computation; it shows that the triangle we consider has area
$$\frac{1}{2}
\left(\frac{1}{(x+y)(x-1)(y-1)}\right)
\left\vert(x+y) - (x-1) - (y-1)\right\vert
=
\frac{1}{(x+y)(1-x)(1-y)}.$$

\qedhere
\end{proof}

\begin{remark} This density is not bounded: it tends to infinity at the three extreme points of the simplex. The reason is that these points are indifferent fixed points for the algorithm. This is a common feature for an additive continued fraction algorithm. However, and oppositely to what happens in dimension 1,  the total mass of that density is bounded; its value is 
\[
\int_0^1
\int_0^{1-x}
\frac{1}{(x+y)(1-x)(1-y)}
dydx
=
\int_0^1
\frac{2 \, \log\left(x\right)}{{\left(x + 1\right)} {\left(x - 1\right)}}
dx
=
\frac{\pi^2}{4}
\]
\end{remark}


\section{The Cassaigne algorithm}\label{sec:cassaigne}

This algorithm was suggested by Julien Cassaigne (DynA3S meeting, LIAFA, Paris,
October 12th,
2015)\footnote{\url{http://www.liafa.univ-paris-diderot.fr/dyna3s/Oct2015}} as a way to generate words of low factor complexity
($p(n)=2n+1$) in with arbitrary letter frequencies on a three-letter alphabet.
An interesting aspect of this algorithm is that there are always
only two branches.

More formally, we can use the notations of the previous section, and define
Cassaigne algorithm on the following partition of
$\Lambda = \{(x_1,x_2,x_3) \in \R^3 \mid x_1>0, x_2>0, x_3>0\}$
up to a set of measure zero:
\begin{align*}
\Lambda_a &= \{(x_1,x_2,x_3) \in\Lambda \mid   x_1>x_3     \}, \\
\Lambda_b &= \{(x_1,x_2,x_3) \in\Lambda \mid   x_3>x_1     \}.
\end{align*}
We define two matrices:
\[
    C_a =
\left(\begin{array}{rrr}
1 & 1 & 0 \\
0 & 0 & 1 \\
0 & 1 & 0
\end{array}\right)\qquad\text{and}\qquad
    C_b =
\left(\begin{array}{rrr}
0 & 1 & 0 \\
1 & 0 & 0 \\
0 & 1 & 1
\end{array}\right).
\]
We define the matrix function $M:\Lambda\to GL(3,\Q)$  such that $M (\bx)=  A_i$  if
and only $\bx\in\Lambda_i$ for $i\in\{a,b\}$. Recall that this matrix function
defines the functions $F(\bx)=M(\bx)^{-1}\bx$, 
$f(\bx)=F(\bx)/\Vert F(\bx)\Vert_1$ and 
$\widetilde F(\bx,\ba)=(M(\bx)^{-1}\bx, M(\bx)^\top\ba)$. 
We show in the left of Figure~\ref{fig:cassaignepartition} the trace on the
unit simplex of the partition of $\Lambda$; each branch of the map
$F$ sends $\Lambda_i$ to all of $\Lambda$ for $i\in\{a,b\}$.
\begin{figure}[h]
\begin{center}
    \includegraphics[height=6cm]{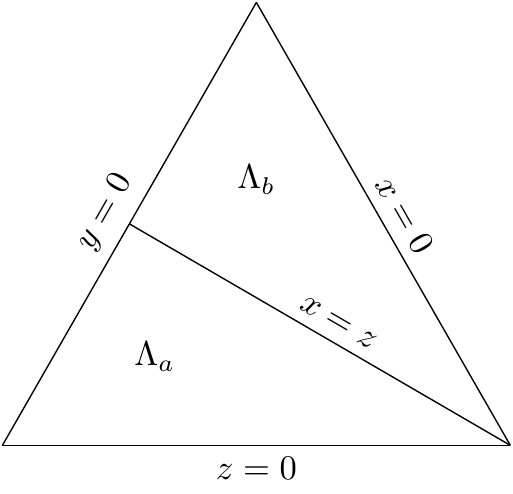}
    \includegraphics[height=6cm]{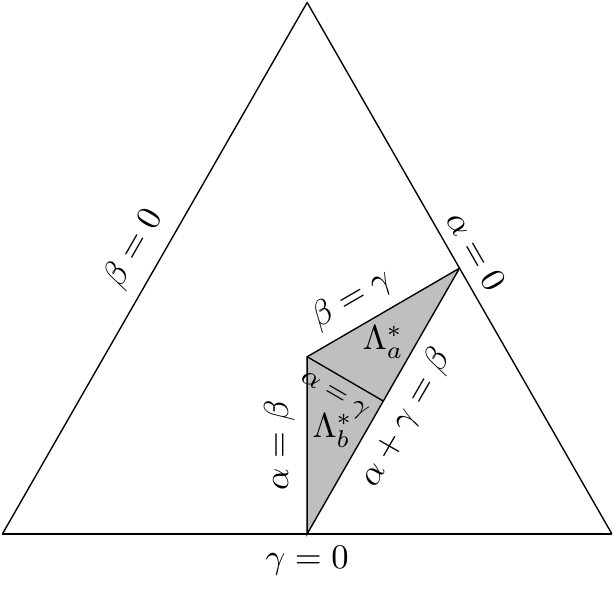}
\end{center}
\caption{Left: the trace on the unit simplex of the partition of $\Lambda$
    into $\Lambda_a$ and $\Lambda_b$.
Right: the partition of $\Lambda^*$ into $\Lambda^*_a$ and $\Lambda^*_b$.}
\label{fig:cassaignepartition}
\end{figure}

Explicitly, the function $\widetilde F$ is given by 
\[
\widetilde F:   (x,y,z,\alpha,\beta,\gamma)
 \mapsto   
\begin{cases}
(x-z,z,y,\alpha,\alpha+\gamma,\beta)   & \mbox{if } (x,y,z)\in\Lambda_1,\\
(y,x,z-x,\beta,\alpha+\gamma,\gamma)   & \mbox{if } (x,y,z)\in\Lambda_2.
\end{cases}
\]

One can easily find a cone $\Lambda^*$ such that $\widetilde F$ is a bijection
on $D= \Lambda\times\Lambda^*$: numerical experimentations show that after few
iterations of  $\widetilde F$, $(\alpha,\beta, \gamma)$  belongs to the subset
of $\Lambda^*\subset \Lambda$ 
\[
    \Lambda^* = \{(\alpha,\beta,\gamma)\in \R^3 \mid
    \max\{\alpha,\gamma\}<\beta<\alpha+\gamma\}.
\]
We define a partition (see the right of Figure~\ref{fig:cassaignepartition})
of $\Lambda^*$ by 
\begin{align*}
\Lambda^*_a &= \{(\alpha,\beta,\gamma)\in \Lambda^* \mid \alpha<\gamma \},\\
\Lambda^*_b &= \{(\alpha,\beta,\gamma)\in \Lambda^* \mid \alpha>\gamma \}.
\end{align*}

\begin{lemma}
$\{\Lambda^*_a, \Lambda^*_b\}$ is a partition of $\Lambda^*$, up to a set of measure 0.
\end{lemma}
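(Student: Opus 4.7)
The plan is essentially immediate from the definitions, since the two sets are cut out of $\Lambda^*$ by the opposite strict inequalities $\alpha<\gamma$ and $\alpha>\gamma$. Both $\Lambda^*_a$ and $\Lambda^*_b$ are by construction subsets of $\Lambda^*$, so the only two things to verify are disjointness and that their union covers $\Lambda^*$ up to a Lebesgue-negligible set.

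First I would observe that $\Lambda^*_a \cap \Lambda^*_b = \emptyset$: a single point $(\alpha,\beta,\gamma)$ cannot simultaneously satisfy $\alpha<\gamma$ and $\alpha>\gamma$. Next I would identify the points of $\Lambda^*$ that fall in neither piece, namely the exceptional set
\[
E = \{(\alpha,\beta,\gamma)\in\Lambda^* \mid \alpha=\gamma\}.
\]
Then I would note that $\Lambda^*_a\cup\Lambda^*_b = \Lambda^*\setminus E$, by the trichotomy of real numbers applied to the pair $(\alpha,\gamma)$.

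The remaining point is to check that $E$ has Lebesgue measure zero in $\R^3$. Since $E$ is contained in the hyperplane $\{\alpha=\gamma\}\subset\R^3$, which is a linear subspace of codimension $1$, it is a $3$-dimensional Lebesgue null set; intersecting with the open cone $\Lambda^*$ preserves this property. This gives the partition property up to measure $0$.

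I do not see any genuine obstacle here: the statement is of the same flavour as the analogous partition lemma for the Reverse algorithm proved just before in the paper, but simpler, since the ambient cone is cut by a single hyperplane rather than three. The only mild care required is to write the argument for the null set cleanly; otherwise the proof is a one-line trichotomy.
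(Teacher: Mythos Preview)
Your proof is correct and matches the paper's approach exactly: the paper simply writes ``Trivial'' for this lemma, and what you have written is precisely the obvious trichotomy argument that justifies that word. There is nothing to add.
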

\begin{proof}
    Trivial.
\end{proof}

As an immediate consequence, we obtain a partition of $D$: 

\begin{corollary} Up to a set of measure 0, we can write the set $D=\Lambda\times\Lambda^*$ as a disjoint union 
    $$D=\coprod_{i\in\{a,b\}} \Lambda_i\times\Lambda^*=
        \coprod_{i\in\{a,b\}} \Lambda\times\Lambda^*_i $$
\end{corollary}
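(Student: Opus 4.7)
The plan is to reduce this corollary to the two one-factor partitions that are already available: $\{\Lambda_a,\Lambda_b\}$ for $\Lambda$ and $\{\Lambda^*_a,\Lambda^*_b\}$ for $\Lambda^*$. First I would observe that $\{\Lambda_a,\Lambda_b\}$ is a partition of $\Lambda$ up to a set of measure $0$: the sets are open and disjoint by construction, and their union omits only the hyperplane slice $\{x_1=x_3\}\cap\Lambda$, which is of codimension $1$ and hence Lebesgue-null in $\R^3$. For $\{\Lambda^*_a,\Lambda^*_b\}$ the corresponding statement is the lemma immediately preceding the corollary.

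Next I would take Cartesian products. Since Cartesian product distributes over disjoint unions,
\[
\Lambda\times\Lambda^* \;=\; (\Lambda_a\cup\Lambda_b)\times\Lambda^* \;=\; (\Lambda_a\times\Lambda^*)\,\sqcup\,(\Lambda_b\times\Lambda^*),
\]
and similarly
\[
\Lambda\times\Lambda^* \;=\; \Lambda\times(\Lambda^*_a\cup\Lambda^*_b) \;=\; (\Lambda\times\Lambda^*_a)\,\sqcup\,(\Lambda\times\Lambda^*_b).
\]
The ``up to measure $0$'' clauses pass through Fubini: if $N\subset\Lambda$ is Lebesgue-null, then $N\times\Lambda^*$ is Lebesgue-null in $\R^6$, so the decomposition of $\Lambda\times\Lambda^*$ induced by $\{\Lambda_a,\Lambda_b\}\times\Lambda^*$ agrees with $\Lambda\times\Lambda^*$ up to a null set, and symmetrically on the other side.

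There is essentially no obstacle here; the only thing to be slightly careful about is the bookkeeping of the null sets, and the fact that both decompositions exhaust the \emph{same} set $D$, which is immediate from the equalities above. The corollary therefore follows formally from the preceding lemma together with the trivial partition of $\Lambda$ by the sign of $x_1-x_3$.
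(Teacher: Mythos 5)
Your proof is correct and matches the paper's intent exactly: the paper states the corollary as an ``immediate consequence'' of the preceding lemma without writing out details, and the argument you supply (product of the trivial partition of $\Lambda$ by the sign of $x_1-x_3$ with the partition $\{\Lambda^*_a,\Lambda^*_b\}$ from the lemma, with null sets handled by Fubini) is precisely what is being left implicit.
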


Hence, to prove that $\widetilde F$ is a bijection on $D$, since each branch of $\widetilde F$ is a non-degenerate linear map, it suffices to prove:

\begin{lemma}\label{lem:subsetTcassaigne}
We have $\widetilde F(\Lambda_a\times\Lambda^*)= \Lambda\times\Lambda^*_a$
and     $\widetilde F(\Lambda_b\times\Lambda^*)= \Lambda\times\Lambda^*_b$.
\end{lemma}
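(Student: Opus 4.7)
The plan is to follow the template of Lemma~\ref{lem:subsetT}: first establish the inclusion $\widetilde F(\Lambda_i \times \Lambda^*) \subseteq \Lambda \times \Lambda^*_i$ by an inequality chase, and then upgrade this inclusion to equality by showing that the underlying linear map sends extreme rays to extreme rays. Because each branch of $\widetilde F$ is linear (block-diagonal, with blocks $C_i^{-1}$ and $C_i^\top$) and invertible, bijectivity of the cone map reduces to matching extreme rays on both sides.

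For the $\bx$-coordinates, both branches of $F$ immediately map $\Lambda_i$ onto $\Lambda$: the formulas $(x,y,z) \mapsto (x-z, z, y)$ on $\Lambda_a = \{x>z\}$ and $(x,y,z) \mapsto (y, x, z-x)$ on $\Lambda_b = \{z>x\}$ parametrize all of $\Lambda$ as $(x,y,z)$ ranges over the respective half-cone. So I only need to track the $\ba$-component. In case $a$ the new covector is $(\alpha', \beta', \gamma') = (\alpha, \alpha+\gamma, \beta)$, and each defining inequality of $\Lambda^*_a$ translates directly into one known to hold on $\Lambda^*$: the condition $\beta'>\gamma'$ becomes $\alpha+\gamma>\beta$; the condition $\alpha'+\gamma'>\beta'$ becomes $\beta>\gamma$; the condition $\alpha'<\gamma'$ becomes $\alpha<\beta$; and $\beta'>\alpha'$ is automatic. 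Case $b$, with $(\alpha', \beta', \gamma') = (\beta, \alpha+\gamma, \gamma)$, is verified symmetrically.

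To upgrade inclusion to equality, I would compute the three extreme rays of $\Lambda^*$ from its three linear inequalities, yielding $(1,1,1)$, $(1,1,0)$, $(0,1,1)$; then apply $C_a^\top$ and $C_b^\top$ and check that the images are exactly the three extreme rays of $\Lambda^*_a$ and $\Lambda^*_b$ respectively. For instance, under $C_a^\top$ one expects to see $(1,2,1)$, $(1,1,1)$, $(0,1,1)$, which one verifies are indeed the extreme rays of $\Lambda^*_a$ (obtained by pairing the three linear boundaries of $\Lambda^*_a$). Since $C_i^\top$ is a linear automorphism of $\R^3$ and both source and target cones are simplicial with three extreme rays, matching extreme rays implies the cone equality $C_i^\top(\Lambda^*) = \Lambda^*_i$, and combined with the $\bx$-component this yields $\widetilde F(\Lambda_i \times \Lambda^*) = \Lambda \times \Lambda^*_i$.

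The only obstacle is combinatorial bookkeeping: one must ensure the defining inequalities of $\Lambda^*$ translate correctly after substitution and that no extreme ray is mislabeled, but there is no conceptual difficulty. The genuine insight of the lemma is encapsulated in the choice of $\Lambda^*$ itself; once the correct cone is on the table, the verification is mechanical.
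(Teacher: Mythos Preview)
Your proof is correct and follows essentially the same approach as the paper: verify $F(\Lambda_i)=\Lambda$ directly, then show $C_i^\top$ carries $\Lambda^*$ onto $\Lambda^*_i$ by matching extreme rays. The paper presents the extreme-ray check as a single matrix product (applying $C_i^\top$ to the matrix whose columns are the extreme rays of $\Lambda^*$) and skips your intermediate inequality chase, since for simplicial cones the extreme-ray argument already gives both inclusion and equality at once; your inequality verification is thus correct but redundant.
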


\begin{proof}
We check directly by computation that $F(\Lambda_i)=\Lambda$. 
We want to show that if $\bx\in\Lambda_i$ and $\ba\in\Lambda^*$, 
then $C_i^\top\ba\in\Lambda^*_i$.  
It is enough to show that the
matrix $C_i^\top$ sends the extreme points of $\Lambda^*$ to the extreme points
of $\Lambda^*_i$ for $i\in\{a,b\}$. This is done by the following matrix computation 
\[
    C_a^\top
\left(\begin{array}{rrr}
1 & 1 & 0 \\
1 & 1 & 1 \\
0 & 1 & 1
\end{array}\right)
=
\left(\begin{array}{rrr}
1 & 1 & 0 \\
1 & 2 & 1 \\
1 & 1 & 1
\end{array}\right)
\qquad
\text{and}
\qquad
    C_b^\top
\left(\begin{array}{rrr}
1 & 1 & 0 \\
1 & 1 & 1 \\
0 & 1 & 1
\end{array}\right)
=
\left(\begin{array}{rrr}
1 & 1 & 1 \\
1 & 2 & 1 \\
0 & 1 & 1
\end{array}\right)
\]
and this proves the equalities.
See Figure~\ref{fig:cassaigne}.
\end{proof}

\begin{figure}[h]
\begin{center}
\includegraphics{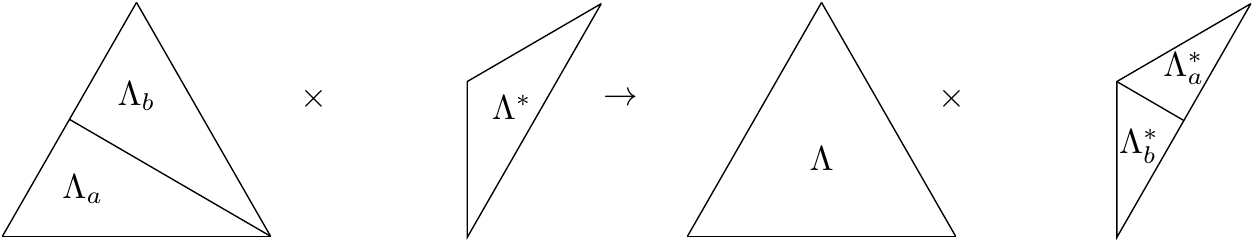}
\end{center}
\caption{
The function $\widetilde F:\Lambda_i\times\Lambda^*\to \Lambda\times\Lambda^*_i$ is
one-to-one and onto
for each $i\in\{1,2,3,4\}$.
}
\label{fig:cassaigne}
\end{figure}

We now apply the results of the previous section. Recall that
\[
D_1 = \{(x,y,z,\alpha,\beta,\gamma)\in D
\mid
x\alpha+y\beta+z\gamma=1
\}.
\]
and the surface of section is defined by 
\[
\Sigma= 
\{(x,y,z,\alpha,\beta,\gamma)\in\R_+^6\mid
x\alpha+y\beta+z\gamma=1,
x+y+z=1,
\alpha<\beta,
\gamma<\beta,
\beta<\alpha+\gamma\}.
\]
It is convenient to take the coordinates on $D_1$ defined in the previous section:  
We keep variables $x$ and $y$ and change the coordinates 
$\alpha'=\alpha-\gamma$,
$\beta'=\beta-\gamma$,
$e=x\alpha+y\beta+z\gamma$,
$\tau=-\log(x+y+z)$; it is readily checked that the jacobian is one. 
Since the domain $D_1$ is defined by $e=0$, and coordinate $\tau$ is the return time of the flow to $\Sigmab$, we get
\[
\Sigma
= \{(x,y,\alpha',\beta',\tau,e)\in\R_+^6\mid
\tau=0,
e=1,
\alpha'<\beta',
0<\beta',
\beta'<\alpha'+\gamma\}
\]
Furthermore, the invariant measure for the return map $\widetilde f$ of the
flow to $\Sigmab $ is $dx\, dy\, d\alpha'\, d\beta'$. From this, we obtain: 

\begin{proposition} \label{prop:invariantmeasureCassaigne}
The density function of the invariant measure of $f:\Delta\to\Delta$ for
the Cassaigne algorithm is
\[
\frac{1}{(1-x)(1-z)}.
\]
\end{proposition}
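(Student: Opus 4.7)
Following the template of the Reverse-algorithm proof in Section~\ref{sec:reverse}, the idea is to apply Proposition~\ref{prop:arnouxnogueira}: for fixed $(x,y)\in\Delta$ (with $z=1-x-y$), the invariant density at $(x,y)$ is, up to a multiplicative constant, the two-dimensional area of the fiber $\{(\alpha',\beta'):(x,y,\alpha',\beta')\in\Sigma\}$.

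The first step is to eliminate $\gamma$. From $e=1$ together with the substitution $\alpha=\alpha'+\gamma$, $\beta=\beta'+\gamma$, one obtains $\gamma = 1 - x\alpha' - y\beta'$. Plugging this into the three stated inequalities $\alpha'<\beta'$, $0<\beta'$, $\beta'<\alpha'+\gamma$ produces three linear half-planes in $(\alpha',\beta')$, the last one being $(1+y)\beta' - (1-x)\alpha' < 1$. Their pairwise intersections give the three vertices of a triangle; I expect these to be $(0,0)$, $\bigl(-\tfrac{1}{1-x},0\bigr)$, and $\bigl(\tfrac{1}{1-z},\tfrac{1}{1-z}\bigr)$, with the last one simplified using $x+y=1-z$.

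The second step is the area computation, which is more elementary than in the Reverse case because Lemma~\ref{lem:triangle} does not quite apply (one vertex is at the origin). Since one edge lies on the $\alpha'$-axis with length $\tfrac{1}{1-x}$, and the opposite vertex has $\beta'$-coordinate $\tfrac{1}{1-z}$, the area is immediately $\tfrac{1}{2}\cdot\tfrac{1}{1-x}\cdot\tfrac{1}{1-z} = \frac{1}{2(1-x)(1-z)}$, which, up to the overall constant absorbed into normalization, yields the stated density $\frac{1}{(1-x)(1-z)}$.

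The main obstacle I anticipate is bookkeeping on the hidden positivity constraints. The paper defines $\Sigma$ as a subset of $\R_+^6$, so $\alpha,\beta,\gamma>0$ (equivalently $\alpha'+\gamma>0$, $\beta'+\gamma>0$, $\gamma>0$) is built into the definition even though these are not listed among the three reduced inequalities in $(\alpha',\beta')$. I would need to verify that they are automatically satisfied on the interior of the candidate triangle---otherwise the fiber could be a proper subset of the triangle and the formula would change. This is done by evaluating $\gamma$ and $\alpha$ at each of the three vertices: they are strictly positive at two of them and vanish at the third (e.g.\ $\gamma=0$ at $\bigl(\tfrac{1}{1-z},\tfrac{1}{1-z}\bigr)$ and $\alpha=0$ at $\bigl(-\tfrac{1}{1-x},0\bigr)$), so by linearity and convexity they are strictly positive on the interior. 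Once this verification is in place the area calculation above closes the argument.
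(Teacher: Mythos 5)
Your proof is correct and follows essentially the same route as the paper: eliminate $\gamma$ via $e=1$, reduce the fiber to the three half-planes $\alpha'<\beta'$, $0<\beta'$, $(1+y)\beta'-(1-x)\alpha'<1$, identify the triangle with vertices $(0,0)$, $\left(\tfrac{1}{x-1},0\right)$, $\left(\tfrac{1}{x+y},\tfrac{1}{x+y}\right)$, and compute its area as $\tfrac{1}{2(1-x)(x+y)}=\tfrac{1}{2(1-x)(1-z)}$, matching the stated density up to normalization. The positivity of $\alpha,\beta,\gamma$ that you re-verify at the vertices is in fact an automatic consequence of the three reduced inequalities (e.g.\ one checks directly that they force $x\alpha'+y\beta'<1$, hence $\gamma>0$, and likewise $\alpha'+\gamma>0$), which is why the paper does not spell it out; your vertex-and-convexity check is a harmless extra safeguard rather than a needed repair.
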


\begin{proof} 
To obtain this density and according to
Proposition~\ref{prop:arnouxnogueira}, it suffices to integrate the measure $
d\alpha'\, d\beta'$ for a fixed $x,y$. 
Given $(x,y)\in\R^2$, the set of admissible 
$(\alpha',\beta')\in\R^2$ satisfies
\[
\left\{
\begin{array}{l}
x\alpha'+y\beta'+\gamma=1,\\
\alpha'<\beta',\\
0<\beta',\\
\beta'<\alpha'+\gamma,
\end{array}
\right.
\quad
\text{or equivalently}
\quad
\left\{
\begin{array}{l}
\alpha'<\beta',\\
0<\beta',\\
(1-x)\alpha'+1>(1+y)\beta'.
\end{array}
\right.
\]
This domain is the interior of a triangle of vertices
$\left(\frac{1}{x-1},0\right)$,
$\left(0,0\right)$,
$\left(\frac{1}{x+y},\frac{1}{x+y}\right)$
shown below.
\begin{center}
\includegraphics{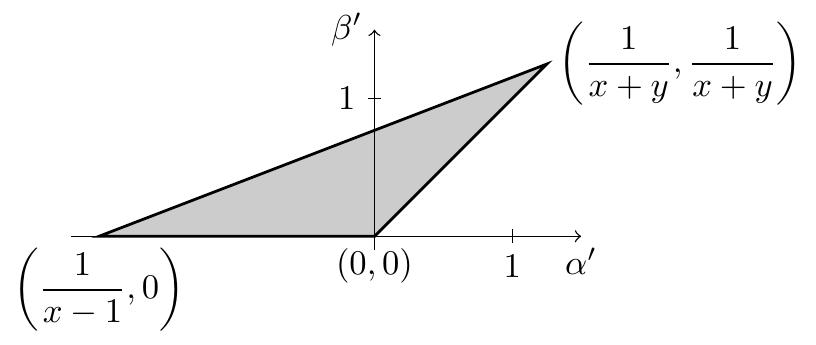}
\end{center}
The area of the triangle is
\[
\frac{1}{2}
\left(0-\frac{1}{x-1}\right)
\left(\frac{1}{x+y}\right)
=
\frac{1}{2(1-x)(x+y)}
\qedhere
\]
\end{proof}

\begin{remark} The total mass of that density is bounded; its value is 
\[
\int_0^1
\int_0^{1-x}
\frac{1}{2(1-x)(x+y)}
dydx
=
\int_0^1
\frac{\log\left(x\right)}{2 \, {\left(x - 1\right)}}
dx
=
\frac{\pi^2}{12}
\]
\end{remark}

%

\section{The Brun algorithm}\label{sec:brun}
The Brun algorithm is arguably the simplest multidimensional continued fraction algorithm; it is defined on the positive cone $\Lambda = \{(x_1,x_2,x_3) \in \R^3 \mid x_1>0, x_2>0, x_3>0\}$ by taking the second largest  coordinate from the largest one. Formally, we have 6 cases, and consider the following partition (up to a set of measure zero):
\[
\Lambda_\pi = \{(x_1,x_2,x_3) \in\Lambda \mid x_{\pi(1)}<x_{\pi(2)}<x_{\pi(3)}\},
\]
for $\pi\in\S_3$ where $S_3=\{123,132,231,213,312,321\}$ is the set of
permutations of $\{1,2,3\}$. We define the six elementary matrices:
\begin{align*}
    B_{123} =
\left(\begin{array}{rrr}
1 & 0 & 0 \\
0 & 1 & 0 \\
0 & 1 & 1
\end{array}\right),
    B_{132} =
\left(\begin{array}{rrr}
1 & 0 & 0 \\
0 & 1 & 1 \\
0 & 0 & 1
\end{array}\right),
    B_{231} =
\left(\begin{array}{rrr}
1 & 0 & 1 \\
0 & 1 & 0 \\
0 & 0 & 1
\end{array}\right),\\
    B_{213} =
\left(\begin{array}{rrr}
1 & 0 & 0 \\
0 & 1 & 0 \\
1 & 0 & 1
\end{array}\right),
    B_{312} =
\left(\begin{array}{rrr}
1 & 0 & 0 \\
1 & 1 & 0 \\
0 & 0 & 1
\end{array}\right),
    B_{321} =
\left(\begin{array}{rrr}
1 & 1 & 0 \\
0 & 1 & 0 \\
0 & 0 & 1
\end{array}\right).
\end{align*}
and the matrix function $M:\Lambda\to GL(3,\Q)$ is defined according to the
above partition:
\[
M: \bx  \mapsto  B_\pi \quad\text{ if and only if }\quad \bx\in\Lambda_\pi.
\]
As in Section~\ref{sec:constructing}, the matrix function $M(\bx)$ allows to
define the functions 
\[
F(\bx)=M(\bx)^{-1}\bx, \quad
f(\bx)=\frac{F(\bx)}{\Vert F(\bx)\Vert_1} \quad\text{and}\quad
\widetilde F(\bx,\ba)=(M(\bx)^{-1}\bx, M(\bx)^\top\ba). 
\]

The six branches of the map $F$ are not full; that is, their image is not the complete cone. It will be useful to define the subcones $\Theta_\pi$ by $$\Theta_\pi = \{(x_1,x_2,x_3) \in\Lambda \mid x_{\pi(1)}<x_{\pi(2)}\}.$$ 

\begin{remark}
Each $\Theta_{\pi}$ is a union  of three $\Lambda_{\sigma}$, for all permutation $\sigma$ such that $\sigma^{-1}\pi(1)<\sigma^{-1}\pi(2)$;  a direct computation shows that $F(\Lambda_\pi)=\Theta_{\pi}$, so that the $\Lambda_{\pi}$ form a Markov partition for $F$, which can be proved to be generating.
\end{remark}

We want to apply the techniques of Section~\ref{sec:constructing}; however, as experiments show, the domain for the natural extension can not be written as a global product, but as a disjoint union of products. Define for each permutation $\pi\in\S_3$,
\begin{align*}
    \Lambda^*_\pi &= \{(\alpha_1,\alpha_2,\alpha_3) \in\Lambda \mid
\alpha_{\pi(1)}<\alpha_{\pi(3)}<\alpha_{\pi(2)}\}
\\
\Theta^*_\pi &= \{(\alpha_1,\alpha_2,\alpha_3) \in\Lambda \mid \alpha_{\pi(1)}<\alpha_{\pi(3)}\}
\end{align*}

\begin{figure}[h]
\begin{center}
\includegraphics{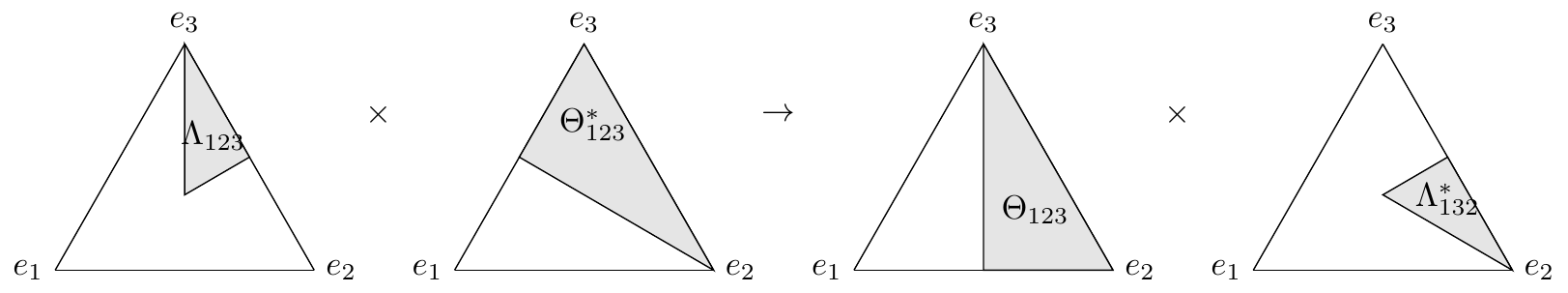}
\end{center}
\caption{
The function $\widetilde F:\Lambda_\pi\times \Theta^*_{\pi}\to
\Theta_\pi\times\Lambda^*_{\pi}$ is
one-to-one and onto
for each $\pi\in\S_3$.
}
\label{fig:brunpartitions}
\end{figure}

This is just a convenient notation; if we denote by $\rho=(123)$ the transposition exchanging 2 and 3, we have $\Lambda^*_{\pi}=\Lambda_{\pi\rho}$ and $\Theta^*_{\pi}=\Theta_{\pi\rho}$, and the previous remark applies: each $\Theta^*_{\pi}$ is the disjoint union of three sets $\Lambda^*_{\sigma}$.  We first prove a lemma.

\begin{lemma} We have: 
$$\bigcup_{\pi\in\S_3}\left(\Lambda_\pi\times \Theta^*_{\pi}\right)=\bigcup_{\pi\in\S_3}\left(\Theta_{\pi}\times\Lambda^*_{\pi}\right)$$
up to a set of measure 0, where both unions are disjoint.
\end{lemma}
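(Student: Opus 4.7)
The plan is to parametrize both unions by the sorting permutations of $x$ and $a$, and to reduce the set-theoretic equality to a small combinatorial identity in $\S_3$. For a generic point $(x,a)\in\Lambda\times\Lambda$ all six coordinates are distinct, so there exist unique $\sigma,\tau\in\S_3$ with $x\in\Lambda_\sigma$ and $a\in\Lambda_\tau$.

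First I would record two partition facts (each up to a set of measure $0$): $\{\Lambda_\pi\}_{\pi\in\S_3}$ partitions $\Lambda$ by definition, and $\{\Lambda^*_\pi\}_{\pi\in\S_3}$ also partitions $\Lambda$ because $\Lambda^*_\pi=\Lambda_{\pi\rho}$ (as already observed in the text) and right multiplication by $\rho$ is a bijection of $\S_3$. Disjointness of both unions then follows immediately: the pieces $\Lambda_\pi\times\Theta^*_\pi$ have pairwise disjoint first factors, while the pieces $\Theta_\pi\times\Lambda^*_\pi$ have pairwise disjoint second factors.

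Next I would translate membership in each union into a single scalar condition on the pair $(\sigma,\tau)$. The point $(x,a)$ lies in the left-hand union iff it lies in $\Lambda_\sigma\times\Theta^*_\sigma$, i.e.\ iff $a_{\sigma(1)}<a_{\sigma(3)}$; using the fact that $\tau$ sorts $a$, this reads $\tau^{-1}\sigma(1)<\tau^{-1}\sigma(3)$. Likewise, the only right-hand piece that can contain $(x,a)$ is $\Theta_{\tau\rho}\times\Lambda^*_{\tau\rho}$, since $a\in\Lambda^*_\pi=\Lambda_{\pi\rho}$ forces $\pi=\tau\rho$; right-hand membership then becomes $x_{(\tau\rho)(1)}<x_{(\tau\rho)(2)}$, i.e.\ $x_{\tau(1)}<x_{\tau(3)}$, or equivalently $\sigma^{-1}\tau(1)<\sigma^{-1}\tau(3)$.

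Setting $\alpha:=\sigma^{-1}\tau\in\S_3$, the desired equality therefore reduces to the purely combinatorial claim
\[
\alpha(1)<\alpha(3) \iff \alpha^{-1}(1)<\alpha^{-1}(3) \qquad\text{for every }\alpha\in\S_3,
\]
which I would settle by direct enumeration of the six elements of $\S_3$: both sides cut out the same three permutations $\{123,132,213\}$. (The four involutions of $\S_3$ satisfy the equivalence trivially since $\alpha=\alpha^{-1}$, and the two remaining $3$-cycles are mutual inverses, both failing $\alpha(1)<\alpha(3)$ and $\alpha^{-1}(1)<\alpha^{-1}(3)$.) This tiny identity is the only step with any real content; the main (mild) obstacle is recognizing that the whole statement collapses to it, and noting that the analogous equivalence would fail in $\S_n$ for $n\ge 4$, so some use of $|\S_3|=6$ is genuinely needed.
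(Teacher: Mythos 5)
Your proof is correct, and it captures the same essential content as the paper's argument, but packages it in a cleaner way. The paper also reduces the equality to a condition on pairs of permutations: it decomposes each side as a union of 18 products $\Lambda_\sigma\times\Lambda_\pi$ and then rewrites the defining inequality $\sigma^{-1}\pi(1)<\sigma^{-1}\pi(3)$ in the symmetric form ``$\sigma(3)=\pi(3)$, or $\sigma(3)=\pi(2)$ and $\sigma(2)=\pi(3)$,'' from which the equality of the two sides follows because this characterization is visibly invariant under swapping $\sigma$ and $\pi$. What your version does differently is to work pointwise (tracking the sorting pair $(\sigma,\tau)$ of a generic $(x,a)$ rather than decomposing sets), and, more importantly, to isolate explicitly the combinatorial pivot on which everything hinges: the equivalence $\alpha(1)<\alpha(3)\iff\alpha^{-1}(1)<\alpha^{-1}(3)$ for $\alpha\in\S_3$. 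The paper leaves this symmetry implicit (its justification ``by definition of $\rho$, this is the same condition'' glosses over exactly this point), so spelling it out is a genuine improvement in clarity. Your closing remark that the equivalence already fails in $\S_4$ (e.g.\ $\alpha=3142$ satisfies $\alpha(1)<\alpha(3)$ but $\alpha^{-1}=2413$ does not) is a nice observation the paper does not make, and it correctly flags that the lemma is not a formal accident but a low-dimensional coincidence.
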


\begin{proof} Since all $\Lambda_{\pi}$ are disjoint, the first union must be disjoint, and since all $\Lambda^*_{\pi}$ are disjoint, the second union is disjoint. 

By the previous remark, each $\Theta^*_{\pi} $ is the disjoint union of three sets $\Lambda_{\sigma}$, for those $\sigma\in S_3$  such that $\sigma^{-1}\pi(1)<\sigma^{-1}\pi(3)$. This equality is possible if, and only if,  $\sigma(3)=\pi(3)$, or $\sigma(3)=\pi(2)$ and  $\sigma(2)=\pi(3)$. Hence the first set is the union of all the 18 products $\Lambda_\sigma\times \Lambda_\pi$, for all pair $(\sigma,\pi)$ satisfying this condition.

The second set can be written $\bigcup_{\pi\in\S_3}\left(\Theta_{\pi}\times\Lambda^*_{\pi}\right)= \bigcup_{\pi\in\S_3}\left(\Theta_{\pi}\times\Lambda_{\pi\rho}\right)=\bigcup_{\pi\in\S_3}\left(\Theta_{\pi\rho}\times\Lambda_{\pi}\right)$, where $\rho=(132)$; and $\Lambda_{\sigma}\subset\Theta_{\pi\rho}$ if and only if $\sigma^{-1}\pi\rho(1)<\sigma^{-1}\pi\rho(2)$: by definition of $\rho$, this is the same condition, hence the two sets are equal.
\end{proof}

\begin{proposition}
The function $\widetilde F$ 
restricted to the domain $\Lambda_\pi\times \Theta^*_{\pi}$ is
one-to-one and onto 
$\Theta_\pi\times\Lambda^*_{\pi}$
for each permutation $\pi\in\S_3$.
The function $\widetilde F$ on the domain 
$D= \cup_{\pi\in\S_3}\left(\Lambda_\pi\times \Theta^*_{\pi}\right)$ 
is one-to-one and onto (See Figure~\ref{fig:brunpartitions}).
\end{proposition}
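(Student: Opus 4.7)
The plan is to mirror the proofs of Lemma~\ref{lem:subsetT} for the Reverse algorithm and Lemma~\ref{lem:subsetTcassaigne} for the Cassaigne algorithm: fix a permutation $\pi\in\S_3$, prove branchwise that $\widetilde F$ restricts to a bijection $\Lambda_\pi\times\Theta^*_\pi\to\Theta_\pi\times\Lambda^*_\pi$, and then invoke the preceding lemma to assemble the six branches into a bijection of $D$ onto itself.

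For the first factor I would verify $F(\Lambda_\pi)=\Theta_\pi$ directly, as anticipated by the remark before the proposition: on $\Lambda_\pi$ the Brun map subtracts $x_{\pi(2)}$ from $x_{\pi(3)}$ and leaves the other two coordinates unchanged, so the image satisfies $x_{\pi(1)}<x_{\pi(2)}$, which is exactly the defining inequality of $\Theta_\pi$, and the obvious inverse (add $x_{\pi(2)}$ back to the $\pi(3)$-th coordinate) shows bijectivity. For the second factor, because $B_\pi^{\top}$ is linear and invertible on $\R^3$, it suffices to check that it sends the three extreme rays of the open simplicial cone $\Theta^*_\pi$ to the three extreme rays of $\Lambda^*_\pi$; this is a finite matrix-vector computation of exactly the type performed in the Cassaigne proof.

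Rather than repeat the verification six times, I would exploit the $\S_3$-equivariance built into the construction: writing $P_\sigma$ for the coordinate-permutation matrix of $\sigma$, one checks easily that $P_\sigma\Lambda_\pi=\Lambda_{\sigma\pi}$, $P_\sigma\Theta^*_\pi=\Theta^*_{\sigma\pi}$, $P_\sigma\Theta_\pi=\Theta_{\sigma\pi}$, $P_\sigma\Lambda^*_\pi=\Lambda^*_{\sigma\pi}$, and $B_{\sigma\pi}=P_\sigma B_\pi P_\sigma^{-1}$ (so $B_{\sigma\pi}^{\top}=P_\sigma B_\pi^{\top} P_\sigma^{-1}$ as well, since $P_\sigma$ is orthogonal). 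A single branch-check therefore transports to all six by conjugation, cutting the computation and eliminating the main source of combinatorial error.

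Once the six branchwise bijections are in place, the global statement is immediate from the preceding lemma: the disjoint unions $\bigcup_{\pi\in\S_3}\Lambda_\pi\times\Theta^*_\pi$ and $\bigcup_{\pi\in\S_3}\Theta_\pi\times\Lambda^*_\pi$ coincide up to a set of measure zero, and $\widetilde F$ restricts to a bijection between matched pieces, so the six bijections assemble into a bijection of $D$ onto itself. The only real obstacle I foresee is purely combinatorial bookkeeping among the six permutations and their associated cones and extreme rays; the $\S_3$-equivariance argument above is tailored to dispatch with it.
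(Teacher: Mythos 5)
Your proposal is correct, but it reaches the branchwise bijection $\widetilde F\colon\Lambda_\pi\times\Theta^*_\pi\to\Theta_\pi\times\Lambda^*_\pi$ by a genuinely different verification than the paper. The paper, for the representative $\pi=(1,2,3)$, simply writes out $\widetilde F(x_1,x_2,x_3,\alpha_1,\alpha_2,\alpha_3)=(x_1,x_2,x_3-x_2,\alpha_1,\alpha_2+\alpha_3,\alpha_3)$, checks the defining inequalities of $\Theta_\pi\times\Lambda^*_\pi$ directly, and then repeats the same inequality check on the explicit inverse $(y_1,y_2,y_2+y_3,\beta_1,\beta_2-\beta_3,\beta_3)$ to get the reverse inclusion. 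You instead propose verifying that $B_\pi^{\top}$ carries the three extreme rays of the simplicial cone $\Theta^*_\pi$ to those of $\Lambda^*_\pi$ (for $\pi=(1,2,3)$ this is $(0,1,0)\mapsto(0,1,0)$, $(0,0,1)\mapsto(0,1,1)$, $(1,0,1)\mapsto(1,1,1)$), and handling the $\bx$-factor by the already-observed $F(\Lambda_\pi)=\Theta_\pi$; this is exactly the mechanism the paper itself uses in Lemmas~\ref{lem:subsetT} and~\ref{lem:subsetTcassaigne}, so you are importing a uniform technique, which is a real gain in coherence. You also make explicit what the paper compresses into ``by symmetry'': the equivariance identities $P_\sigma\Lambda_\pi=\Lambda_{\sigma\pi}$, $P_\sigma\Theta^*_\pi=\Theta^*_{\sigma\pi}$, $B_{\sigma\pi}=P_\sigma B_\pi P_\sigma^{-1}$, which correctly transport one branch check to all six. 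One caveat: with the paper's own convention $\sigma.(x_1,\ldots,x_d)=(x_{\sigma(1)},\ldots,x_{\sigma(d)})$ the induced permutation matrix actually gives $P_\sigma\Lambda_\pi=\Lambda_{\sigma^{-1}\pi}$; your identities hold for the opposite (left-action) convention $(P_\sigma\bx)_i=x_{\sigma^{-1}(i)}$, so you should pin down which $P_\sigma$ you mean to avoid an off-by-inverse slip in the bookkeeping. The final assembly step — invoking the preceding lemma that both disjoint unions cover $D$ up to measure zero, so the six branch bijections glue to a bijection of $D$ — matches the paper verbatim.
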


\begin{proof} We first prove that  $\widetilde F\left(\Lambda_\pi\times \Theta^*_{\pi}\right)=\Theta_\pi\times\Lambda^*_{\pi}$; by symmetry, it is enough to prove it for $\pi=(1,2,3)$. Let $(x_1, x_2, x_3, \alpha_1, \alpha_2, \alpha_3)\in \Lambda_\pi\times \Theta^*_{\pi}$, and let $(y_1, y_2, y_3, \beta_1, \beta_2, \beta_3)=\widetilde F(x_1, x_2, x_3, \alpha_1, \alpha_2, \alpha_3)=(x_1, x_2, x_3-x_2, \alpha_1, \alpha_2+\alpha_3, \alpha_3)$; we check that $y_1\le y_2 $ and $\beta_1\le \beta_3\le \beta_2$, which is the condition to be in $\Theta_\pi\times\Lambda^*_{\pi}$. To prove equality, it is enough to consider the reciprocal map $\widetilde F^{-1}(y_1, y_2, y_3, \beta_1, \beta_2, \beta_3)=(y_1, y_2, y_2+y_3, \beta_1, \beta_2-\beta_3, \beta_3)$, and to prove that $\widetilde F^{-1}\left(\Theta_\pi\times\Lambda^*_{\pi}\right) =\Lambda_\pi\times \Theta^*_{\pi}$, which is done exactly the same way.

Since each branch of $\widetilde F$ is an invertible linear map, it is a bijection between its domain and its image; by the previous lemma, $\widetilde F$ is a bijection on $D$.

\end{proof}

\begin{proposition} \label{prop:invariantmeasureBrun}
The density function of the invariant measure of $f:\Delta\to\Delta$ for
the Brun algorithm is
\[
\frac{1}{2\,x_{\pi(2)}(1-x_{\pi(2)})(1-x_{\pi(1)}-x_{\pi(2)})}
\]
on the part $\bx=(x_1,x_2,x_3)\in\Lambda_\pi\cap\Delta$.
\end{proposition}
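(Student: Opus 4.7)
The plan is to follow the template used for the Reverse and Cassaigne algorithms, adapted to the fact that the bijectivity domain $D = \bigcup_{\pi\in\S_3}(\Lambda_\pi \times \Theta^*_\pi)$ is only a piecewise product. By Proposition~\ref{prop:arnouxnogueira} together with the change of variables recalled at the end of Section~\ref{sec:constructing}, the invariant density at $\bx \in \Lambda_\pi \cap \Delta$ equals the Lebesgue area of the fiber $\{\ba : (\bx,\ba) \in \Sigma \cap (\Lambda_\pi \times \Theta^*_\pi)\}$ expressed in the transverse coordinates $b_i = \alpha_i - \alpha_3$, $i = 1, 2$. Because $\S_3$ acts on the whole setup by coordinate permutation, I would carry out the computation only for $\pi = (1,2,3)$ and then relabel.

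For $\pi = (1,2,3)$ one has $\Theta^*_\pi = \{\alpha_1 < \alpha_3\}$. Solving $\langle\bx,\ba\rangle = 1$ for $\alpha_3$ on the section $x_1+x_2+x_3 = 1$ gives $\alpha_3 = 1 - x_1 b_1 - x_2 b_2$, and hence
\[
\alpha_1 = 1 + (1-x_1)b_1 - x_2 b_2, \qquad \alpha_2 = 1 - x_1 b_1 + (1-x_2)b_2.
\]
The requirements $\alpha_1,\alpha_2,\alpha_3 > 0$ together with $\alpha_1 < \alpha_3$ reduce to the three half-plane conditions $b_1 < 0$, $\alpha_1 > 0$, $\alpha_2 > 0$; the positivity of $\alpha_3$ is automatic because $b_1 < 0$ forces $\alpha_3 > \alpha_1$.

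Pairwise intersection of the three boundary lines yields a triangle with vertices $(0,\,1/x_2)$, $(0,\,-1/(1-x_2))$, and $(-1/x_3,\,-1/x_3)$, the last one coming from $\alpha_1 = \alpha_2 = 0$, which forces $b_1 = b_2$ and $(1-x_1-x_2)\,b_1 = -1$. The area is then cleanest via the base-times-height formula along the $b_1 = 0$ axis: base $= 1/x_2 + 1/(1-x_2) = 1/(x_2(1-x_2))$, height $= 1/x_3$, giving area $\tfrac{1}{2\,x_2\,(1-x_2)\,x_3}$. Substituting $x_3 = 1 - x_1 - x_2$ yields the claimed density for $\pi = (1,2,3)$, and the general $\pi$ case follows by relabeling.

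I expect the main subtlety to be not the area computation itself, which is routine, but the bookkeeping of pairing $\Lambda_\pi$ with $\Theta^*_\pi$ (rather than with $\Lambda^*_\pi$) in the fiber integral --- this is the distinctive feature of Brun compared to Reverse or Cassaigne, where the $\ba$-domain was a single cone independent of the branch. As a sanity check, one could verify the functional equation $\delta(\bx) = \sum_{\by : f(\by) = \bx} \delta(\by)/J_f(\by)$ directly on one piece, summing over the three preimages of a generic point provided by the Markov structure of the six branches.
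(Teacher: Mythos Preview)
Your proposal is correct and follows essentially the same route as the paper: you take $\pi=(1,2,3)$, pass to the coordinates $b_i=\alpha_i-\alpha_3$ on the section, reduce the fiber over $\bx$ to a triangle in the $(b_1,b_2)$-plane, and read off its area. The paper arrives at the identical triangle (writing the vertices as $(0,1/(x_2-1))$, $(0,1/x_2)$, $(1/(x_1+x_2-1),1/(x_1+x_2-1))$, which are your vertices rewritten) and computes the same area; the only cosmetic differences are that the paper lists the condition $\alpha_3>0$ separately rather than observing it is implied by $b_1<0$ and $\alpha_1>0$, and computes the area by a direct base-times-height in the same spirit as yours.
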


\begin{proof}
    In this proof, we suppose that $\pi=(1,2,3)$.
    For Brun algorithm, the surface of section for the part
    $\vec{x}\in\Lambda_\pi$ is
\[
    \Sigma=
\{(x_1,x_2,x_3,\alpha_1,\alpha_2,\alpha_3)\in\R_+^{6} \mid \,
    \sum_{i=1}^{3}x_i\alpha_i=1,
    \sum_{i=1}^{3}x_i=1,
    \bx\in\Lambda_\pi,
    \ba\in\Theta^*_\pi\}.
\]
As explained in section \ref{sec:constructing},
we use an explicit change of variables that is useful to compute explicitly
all these measures in the case of the norm $\|\bx\|=\sum_{i=1}^3 |x_i|$.
Define $y_1=x_1$, $y_2=x_2$ and $\tau=-\log(\sum_{i=1}^3 |x_i|)$; define
$\beta_1=\alpha_1-\alpha_3$,
$\beta_2=\alpha_2-\alpha_3$ 
and $e=\sum_{i=1}^3 \alpha_i x_i$. A straightforward
computation shows that this change of coordinates has jacobian 1.
Below, we keep variables $x_1$ and $x_2$ instead of $y_1$ and $y_2$.
We get that $\Sigma$ is the set of 
$(x_1,x_2,\beta_1,\beta_2,\tau, e)\in\R^{6}$
    satisfying
\[
\left\{
\begin{array}{ll}
    \tau=0,&
\beta_1<0,\\
e=1,&
\beta_1+\alpha_3>0,\\
x_1<x_2,\qquad& 
\beta_2+\alpha_3>0,\\
&\alpha_3>0.
\end{array}
\right.
\]
The equalities $e=1$ and $\sum_{i=1}^3 x_i=1$ allows to substitute
$\alpha_3$ above. Indeed,
\[
    1 
    =e 
    = \sum_{i=1}^3 \alpha_i x_i
    = (\beta_1+\alpha_3) x_1 + (\beta_2+\alpha_3) x_2 + \alpha_3x_3
    = \beta_1x_1 +\beta_2x_2 + \alpha_3(x_1+x_2+x_3)
    = \beta_1x_1 +\beta_2x_2 + \alpha_3.
\]
So we get that $\Sigma$ is described by
\[
\left\{
\begin{array}{lll}
    \tau=0,&
\beta_1<0,\\
e=1,&
\beta_1x_1+\beta_2x_2-\beta_1<1,\\
x_1<x_2,\qquad&
\beta_1x_1+\beta_2x_2-\beta_2<1,\\
& \beta_1x_1+\beta_2x_2<1.
\end{array}
\right.
\]
From Proposition~\ref{prop:arnouxnogueira}, the density is equal to the
volume of the polytope $\{\ba: (\bx,\ba)\in\Sigma\}$:
\[
    \displaystyle
    \delta(x_1,x_2) = \int_{\{(\beta_1,\beta_2):
    (x_1,x_2,0,\beta_1,\beta_2,1)\in\Sigma\}} 1 \,d\beta_1\,d\beta_2
\]
Given $(x_1,x_2)\in\R^2$, the polytope corresponds to the set of
$(\beta_1,\beta_2)\in\R^2$ satisfying the four inequalities.
This domain is the interior of a triangle with vertices
$\left(0,\frac{1}{x_2-1}\right)$,
$\left(0,\frac{1}{x_2}\right)$,
$\left(\frac{1}{x_1+x_2-1},\frac{1}{x_1+x_2-1}\right)$
shown below.
\begin{center}
\includegraphics{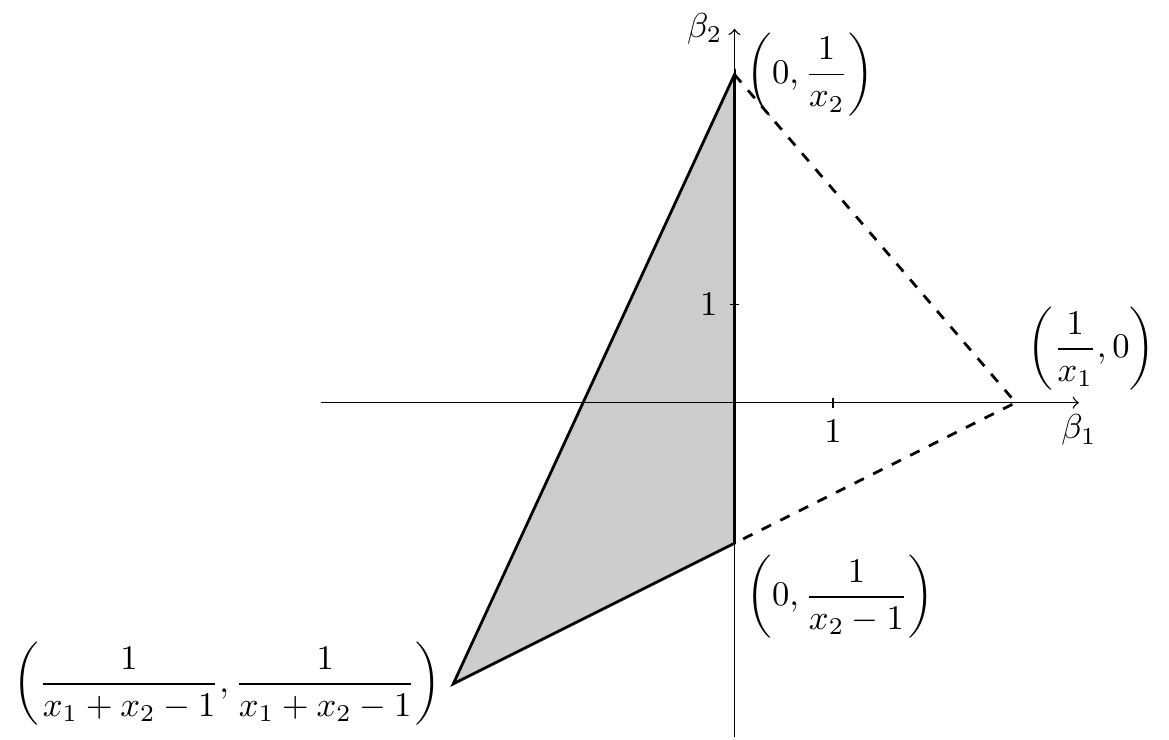}
\end{center}
Its area is
\[
\frac{1}{2}
\left(\frac{1}{x_2}-\frac{1}{x_2-1}\right)
\left(0-\frac{1}{x_1+x_2-1}\right)
=
\frac{1}{2x_2(1-x_2)(1-x_1-x_2)}
\qedhere
\]
\end{proof}

The mass of that density for the part $x_1<x_2<x_3$ is:
\[
\int_0^{\frac{1}{3}}
\int_{x_1}^{\frac{1}{2}-\frac{x_1}{2}}
\frac{1}{2x_2(1-x_2)(1-x_1-x_2)}
dx_2dx_1
=
-\frac{\pi^{2}}{24} + \frac{1}{2} \, \log\left(3\right)
\log\left(2\right) - \frac{1}{2} \, {\rm Li}_{2}(\frac{2}{3}) + \frac{3}{2} \,
{\rm Li}_{2}(\frac{1}{3}) - {\rm Li}_{2}(-\frac{1}{3})
\]
we will prove in Section~\ref{sec:remarks} that this quantity  is equal to $\frac{\pi^{2}}{24}$.



\section{Experimentations: domain of the natural
extension}\label{sec:experimentations}

Since the map $F$ is linear, the trajectory of $\lambda \bx$ is homothetic to that of $\bx$; hence, we can always normalize $\bx$ to $\frac{\bx}{\|\bx\|}$, and consider it as a point in the unit simplex. We can do the same thing for $\ba$, and both $(\bx,\ba)\in\R^d\times\R^d$ and $\widetilde F(\bx,\ba)$, its image under
$\widetilde F$, can be represented graphically by a pair of points in a pair
of unit simplexes. This gives four points in four distinct simplexes. To visualise
the effect of $\widetilde F$, we draw each of these four points with the  color
according to which element of the partition of $\Lambda$ the point $\bx$
belongs. We could do this exercice for a set of random points
$(\bx,\ba)\in\R^d\times\R^d$, but in order to find the domain of the natural extension,  it is better to consider some orbit
$(\widetilde F^n(\bx,\ba))_{n\geq0}$ for some starting point $(\bx,\ba)\in\R^d\times\R^d$.
Below are the pictures we get for some Multidimensional Continued Fraction algorithms.


\begin{figure}
\begin{center}
\includegraphics[width=\linewidth]{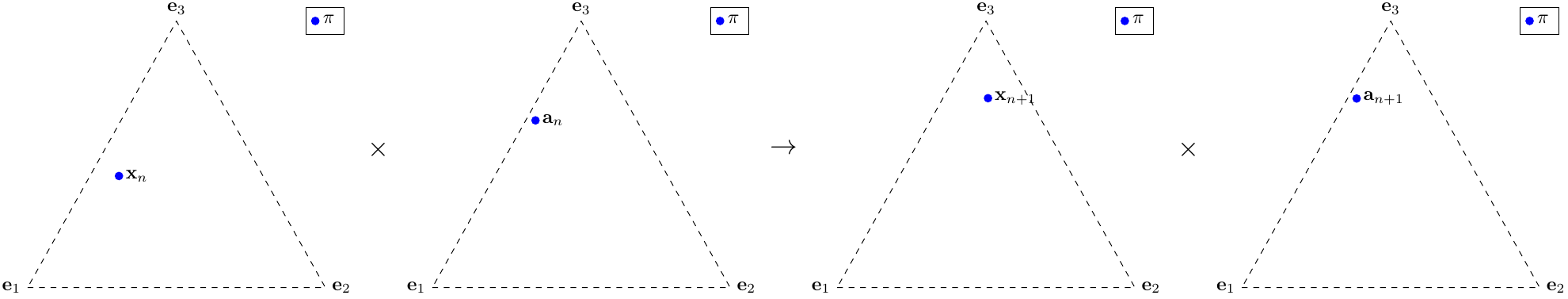}
\end{center}
\caption{For some starting point $(\bx,\ba)\in\R^d\times\R^d$, we consider
the sequence $(\bx_n,\ba_n)=F^n(\bx,\ba)$ for $n\geq0$. The points $\bx_n$,
$\ba_n$, $\bx_{n+1}$ and $\ba_{n+1}$ are drawn in four different plots with a
color or label $\pi$ associated to the matrix $M(\bx_n)$. This experimentation
allows to view the domain of the natural extension of $F$.}
\end{figure}

\begin{figure}
\begin{center}
\includegraphics[width=\linewidth]{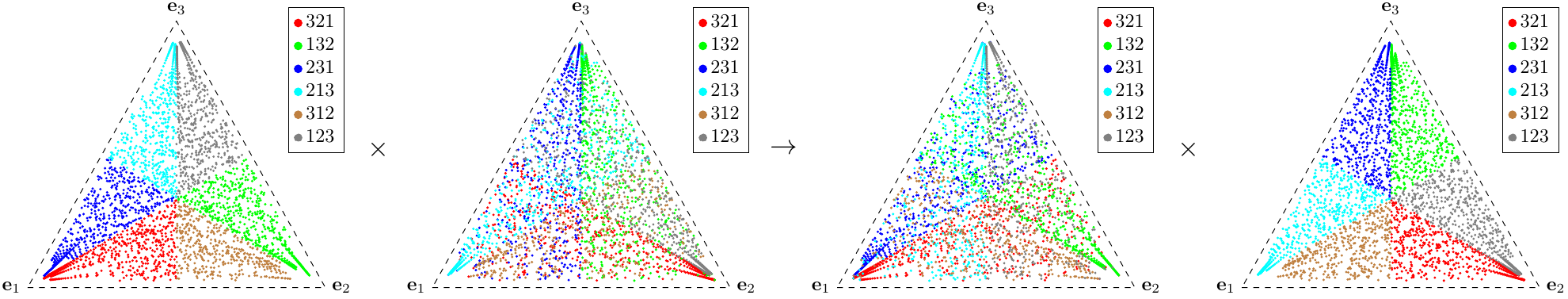}
\end{center}
\caption{Experimental natural extension of Brun algorithm.}
\end{figure}
\begin{figure}
\begin{center}
\includegraphics[width=\linewidth]{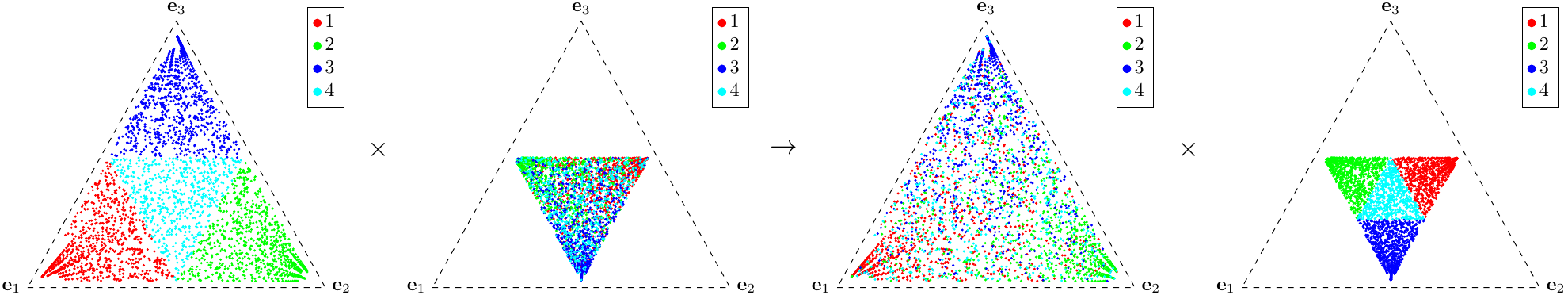}
\end{center}
\caption{Experimental natural extension of Reverse algorithm.}
\end{figure}
\begin{figure}
\begin{center}
\includegraphics[width=\linewidth]{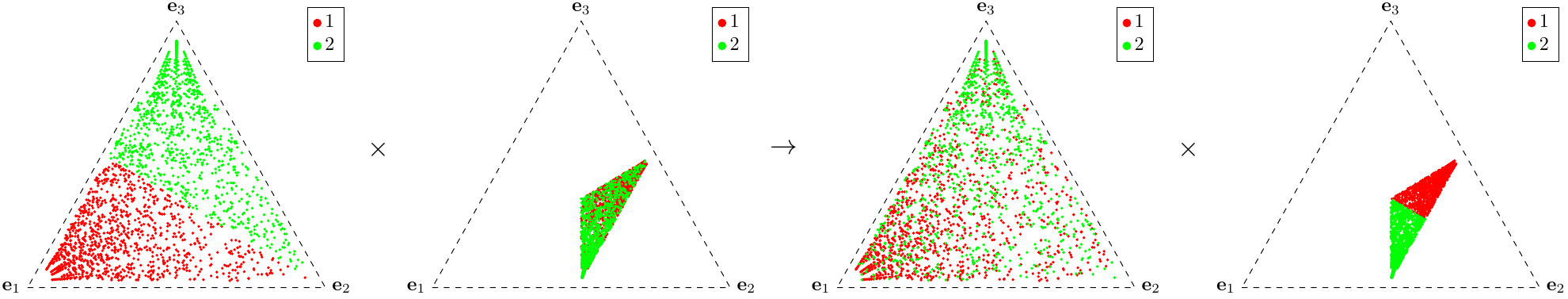}
\end{center}
\caption{Experimental natural extension of Cassaigne algorithm.}
\end{figure}
\begin{figure}
\begin{center}
\includegraphics[width=\linewidth]{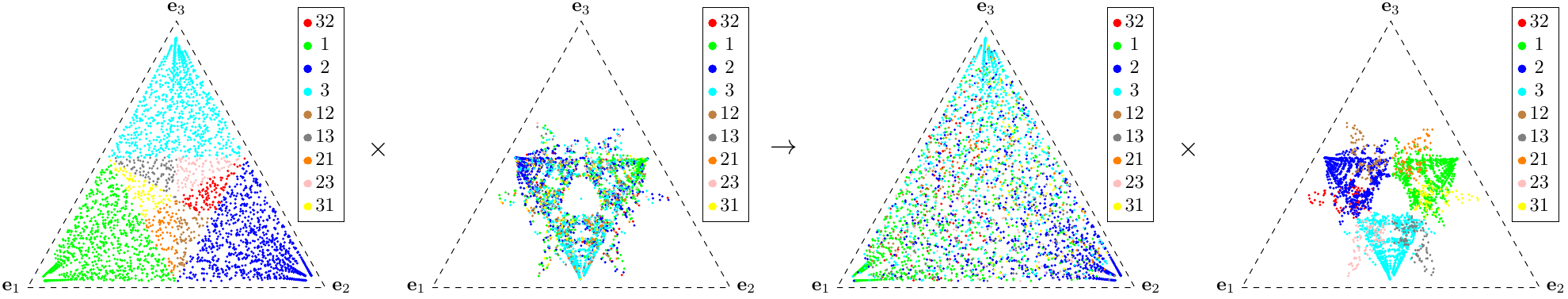}
\end{center}
\caption{Experimental natural extension of Arnoux-Rauzy-Poincaré algorithm.}
\end{figure}

The experimentation shows that a domain of positive measure can be expected
for Brun and Reverse algorithms. The method also works for Selmer algorithm.
The method does not work for Poincar\'e and Meester algorithms: both do not
have a finite measure preserving transformation. Note that Poincar\'e does
have an invariant measure equivalent to Lebesgue but it is not conservative.
Finally, the domain for AR-Poincar\'e \cite{MR3283831} and for the additive version of Jacobi-Perron algorithm are fractals and defined by an IFS. Both of these algorithms are known to have an invariant measure absolutely continuous with respect to Lebesgue measure; this was shown for Jacobi-Perron algorithm by Broise and Guivarc'h \cite{MR1838461}, and we will prove it for  AR-Poincar\'e in a forthcoming paper, but we do not know their density function.

In Figure~\ref{fig:arppng1}, we compute the first 2000000 iterations of an
orbit in a $1024\times 1024$ pixels picture. 

\begin{figure}[h]
\begin{center}
\includegraphics[width=.48\linewidth]{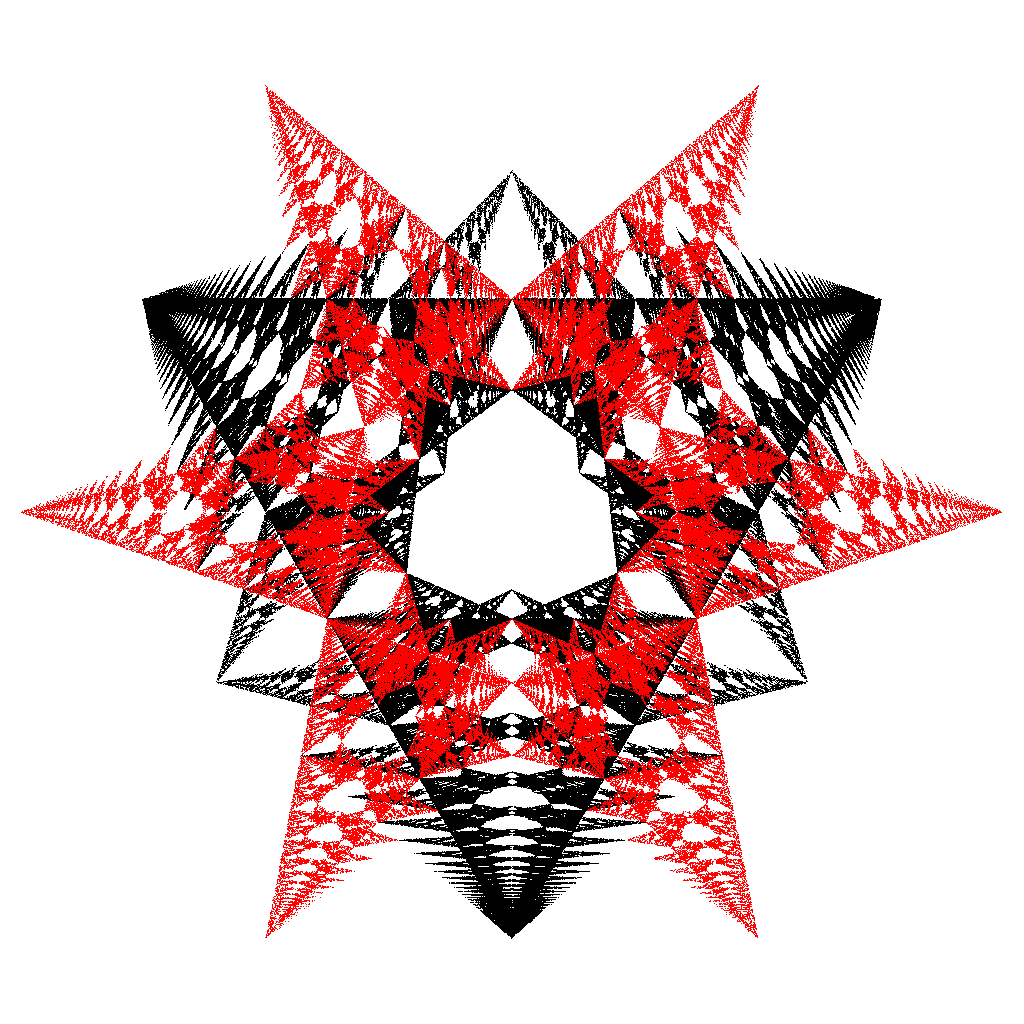}
\includegraphics[width=.48\linewidth]{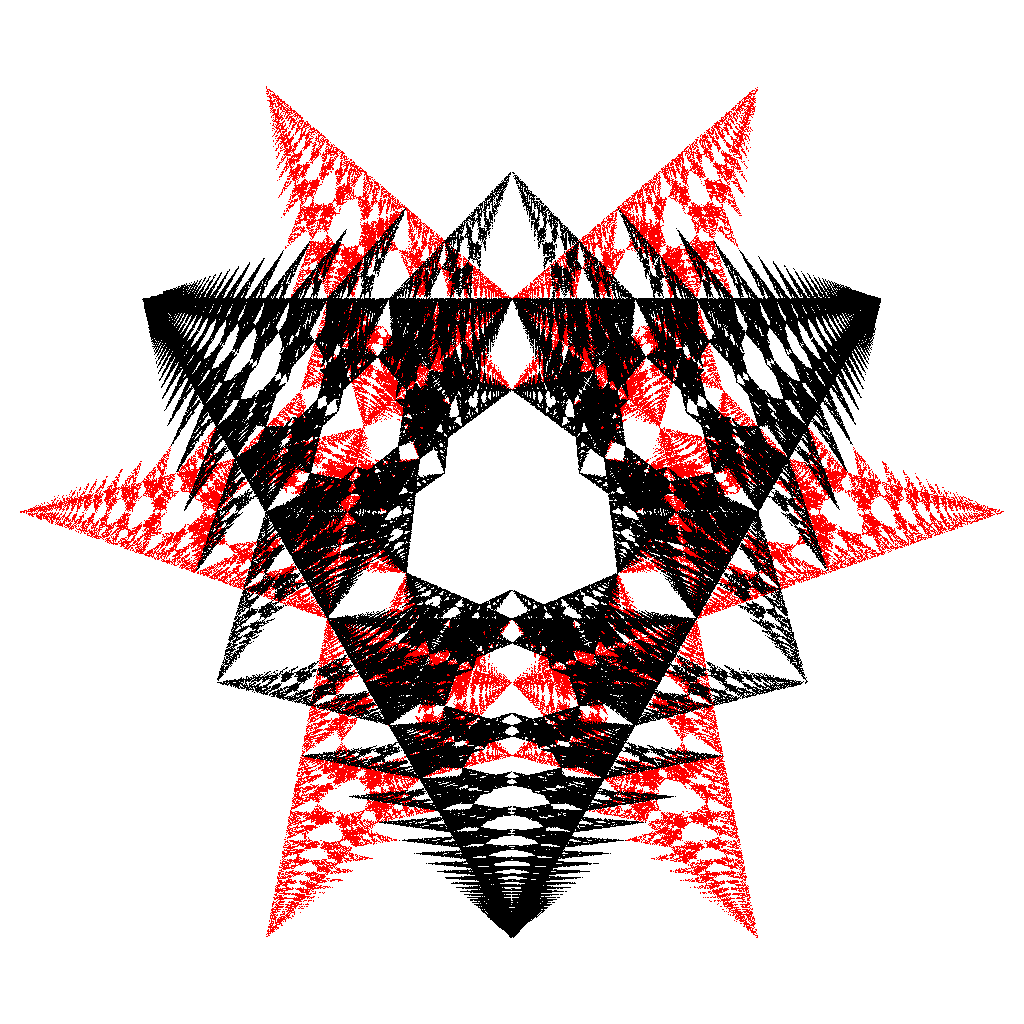}
\end{center}
\caption{$2\times10^6$ iterations shown in a $1024\times 1024$ pictures
of the window $[-0.6,0.6]\times[-0.6,0.6]$.
On the left, points in the six Poincar\'e branches are drawn last. On the
right, points in the three Arnoux-Rauzy branches are drawn last.}
\label{fig:arppng1}
\end{figure}

\begin{figure}[h]
\begin{center}
\includegraphics[width=.48\linewidth]{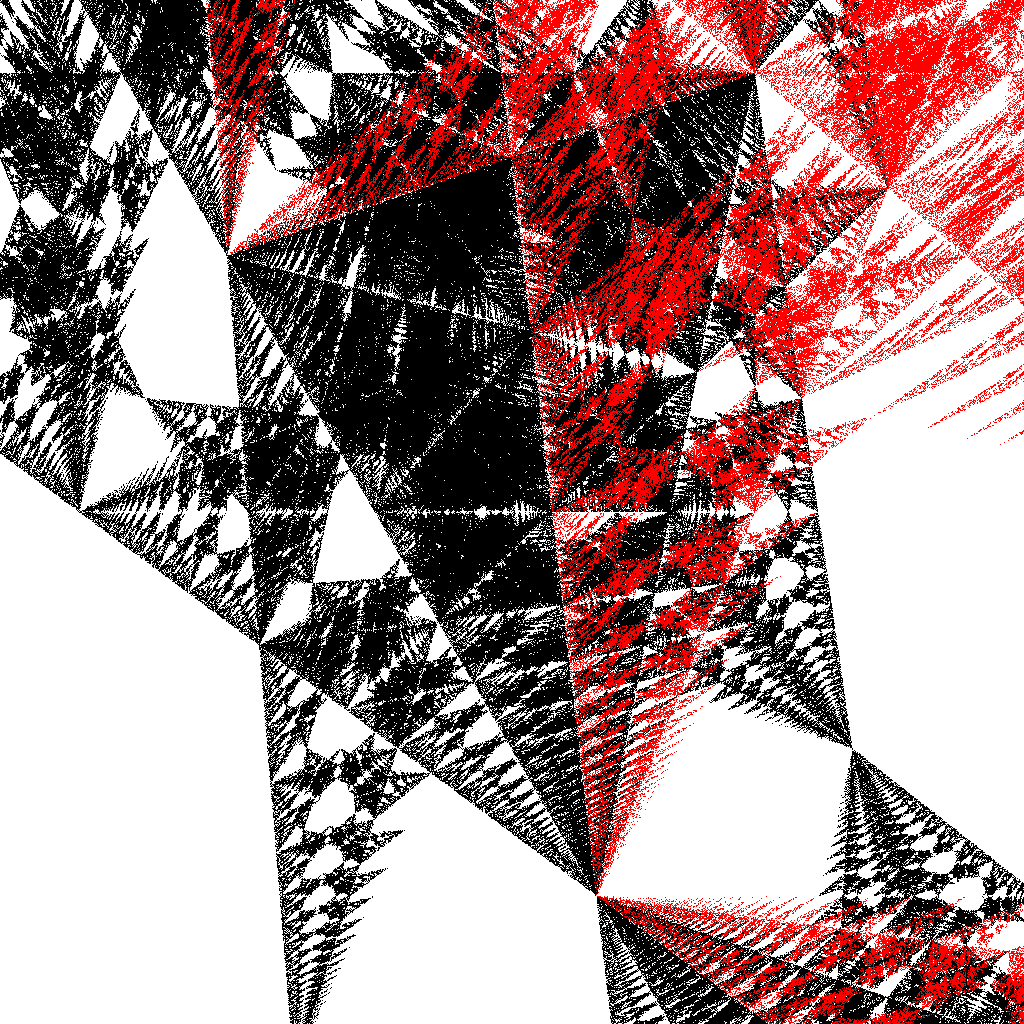}
\includegraphics[width=.48\linewidth]{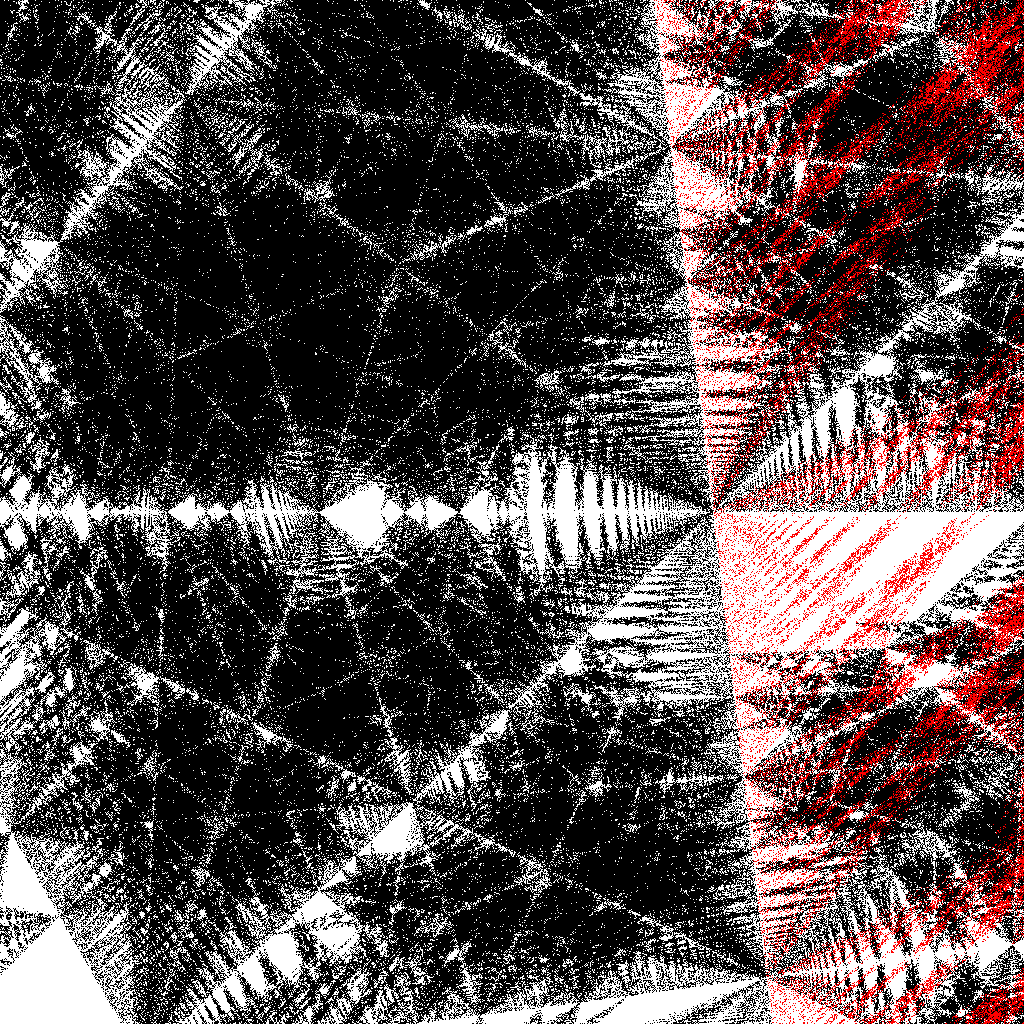}
\end{center}
\caption{
Left:$10^8$ iterations and zoom in $[0.05,0.15]\times[0.05,0.15]$.
Right:$10^9$ iterations and zoom in $[0.09,0.11]\times[0.09,0.11]$.}
\label{fig:arppng2}
\end{figure}

This set presents a symmetry of order 3; it is decomposed  in 9 subsets, and one can prove that there are 3 disjoint groups of three sets. It is unclear whether these sets contain open balls, have positive measure, or even dimension 2; it is also unclear if the three sets in each groups are disjoint, but if this is the case, it seems difficult to prove.

\section{Additional remarks}\label{sec:remarks}

\subsection{Invariant domain and fixed point theorem}

Of course, the essential element for this construction is to find  the invariant domain $D$.  It can sometimes be proved from elementary considerations, as we have seen in the two examples above, and numerical experimentations can be useful. 

When no obvious solution is found, one can use the fact that the formula for $\widetilde f$ is known. For fixed $\bx$, $\widetilde f$ is an affine map on an hyperplane in $\Lambda^*$. In some cases, it can be proved that this collection of maps depending on $\bx$ is uniformly contracting, and this implies that there exists a unique compact invariant set which is fixed by $\widetilde f$, see \cite{2015_Arnoux_Schmidt}. Numerical experiments seem to show that this property is still valid under much weaker conditions. This allows, as we have seen above, to guess the domain. 

However, this domain is interesting only if it has strictly positive measure, which is not clear in a number of cases; even in that case, it can be very difficult to describe, as shown by the case of the japanese continued fraction, which has been very precisely studied, see for example \cite{MR2946184}.

Remark also that a very well studied case, that of Rauzy induction, show the complications which can appear : if one uses exactly the procedure we describe in the present paper, one does not get a natural extension, because one gets a subset of lower dimension (given by the subspace called $H(\pi)$ by Veech, see \cite{MR644019}), and it is necessary  to add other coordinates to get the natural extension. 

\subsection{Symmetric algorithms}

The group $S_d$ of permutations on $d$ elements acts in a natural way on $\R^d$ by $\sigma.(x_1, \ldots, x_d)=(x_{\sigma(1)}, \ldots, x_{\sigma(d)})$.  We will say that a map $F$ is {\em symmetric } if, for all permutations $\sigma$,  $F(\sigma.\bx)=\sigma.F(\bx)$. Brun and Reverse algorithms are symmetric; a classical example of a non-symmetric algorithm is Jacobi-Perron algorithm: a permutation of coordinates gives in general a very different orbit.

When we have a symmetric algorithm defined on the positive cone, we can quotient by $S_d$, and restrict the algorithm to the subset  defined by $0<x_1<x_2<\cdots<x_d$. The original symmetric algorithm is often named the {\em unsorted algorithm}, and the quotient algorithm is named the {\em sorted algorithm}. These algorithms are essentially equivalent, since the sorted algorithm is a $d!$-to-1 factor of the unsorted one. The sorted algorithm is generally more convenient for explicit computations, in particular numerical experiments, since the relations between coordinates are known; but the unsorted algorithm, being more symmetric, gives usually simpler and more natural formulas, and is better adapted to theoretical considerations; another interesting feature is that, in classical examples, the unsorted algorithm preserves the orientation, but the sorted algorithm does not, since transpositions reverse the orientation. 

This leads to many apparently distinct, but essentially equivalent, presentations for the same algorithm; this is already true for the basic Farey algorithm, which takes several forms with similar properties.

\subsection{Parabolic points and acceleration}

We say that a continued fraction algorithm has an indifferent fixed point if one of the branches of the projective map $f$ associated to this algorithm  has a fixed point where the Jacobian matrix is the identity (this does not depend on the choice of coordinates). Several classical algorithms have one or several indifferent fixed points; this presents the inconvenient, in dimension 1, that the invariant measure becomes infinite, and the entropy zero.

A well-known way to get rid of this problem consists in "accelerating" the algorithm; more precisely, let $f_i$ be the branch of $f$ which has an indifferent fixed point, and let $A_i$ be its domain. For any $x$ different of the fixed point, define $n_x$ as the smallest $n$ such the $f^n(x)\notin A_i$, and replace the branch $f_i$ by the countable collection of branches $x\mapsto f^{n_x}(x)$. This replaces what is generally called an "additive" algorithm with a finite number of branches by a "multiplicative" algorithm, with a countable number of branches, no indifferent fixed point, and with a finite invariant measure.

\subsection{Changes of coordinates}

As we said before, a continued fraction map in the usual sense is a projective map; hence there is no canonical coordinates for explicit computations, and many things depend in particular on the choice of the section $\Sigma$. As an example, we show another section for Brun's algorithm, which will allow us to compute the total mass of the invariant measure in a different way.

Instead of using the norm $\sum_{i=1..3} |x_i|$, we chose the norm $\sup_{i=1..3} |x_i|$, and we work out the proof of Proposition \ref{prop:invariantmeasureBrun}
 in this setting. We suppose as before that  $\pi=(1,2,3)$. The corresponding section, for this choice of norm, is 
 
 \[
     \Sigma=
\{(x_1,x_2,x_3,\alpha_1,\alpha_2,\alpha_3)\in\R_+^{6} \mid \,
    \sum_{i=1}^{3}x_i\alpha_i=1,
    x_1<x_2<x_3=1,
     \alpha_1<\alpha_3\}.
\]

We make a different choice of coordinates here, adapted to this norm :  keep  $x_1$,  $x_2$, $\alpha_1$ and $\alpha_2$; define $\tau=-\log(|x_3|)$
and $e=\sum_{i=1}^3 \alpha_i x_i$. A straightforward computation shows that this change of coordinates has jacobian 1.

We get that $\Sigma$ is the set of 
$(x_1,x_2,\beta_1,\beta_2,\tau, e)\in\R^{6}$
    satisfying
\[
\left\{
\begin{array}{lll}
0<x_1<x_2<1,\qquad&  \tau=0,&e=1\\
0<\alpha_1<\alpha_3,&
0<\alpha_2.&.
\end{array}
\right.
\]
The equalities $e=1$ and $ x_3=1$ allows to substitute
$\alpha_3$ above, hence $\Sigma$ is also described by
\[
\left\{
\begin{array}{lll}
0<x_1<x_2<1,\qquad&  \tau=0,&e=1\\
\alpha_1(1+x_1)+\alpha_2x_2<1,& 0<\alpha_1,&0<\alpha_2.
\end{array}
\right.
\]

Hence  the density is equal to the
volume of the polytope $\{\ba: (\bx,\ba)\in\Sigma\}$:
\[
    \displaystyle
    \delta(x_1,x_2) = \int_{\{(\alpha_1,\alpha_2):
    (x_1,x_2,0,\alpha_1,\alpha_2,1)\in\Sigma\}} 1 \,d\alpha_1\,d\alpha_2
\]
This is again given by a triangle, this time with vertices $(0,0)$, $(1/(1+x_1),0)$, $(0,1/x_2)$. Hence the invariant density is given in these coordinates by $\frac 1{2x_2(1+x_1)}$.


The mass of that density on the triangle  $0<x_1<x_2<1$ is:
\[
\int_0^{1}
\int_{0}^{x_2}
\frac{1}{2x_2(1+x_1)}
dx_1dx_2
=\int_0^1\frac{\log(1+x_2)}{2x_2}\, dx_2
\]

Taking the series expansion of $\log(1+x_2)$ and integrating, we find that 
 this quantity  is equal to the series $\sum_{n=1}^{\infty}\frac{(-1)^{n+1}}{2 n^2}=\frac{\pi^2}{24}$, as announced at the end of section 5. This is just a change of variable in the integral, which can be made completely explicit. Since we made the computation on a sixth of the surface, the total mass of the invariant measure for the unsorted algorithm is $\frac{\pi^2}{4}$.
 
 The equality $-\frac{\pi^{2}}{24} + \frac{1}{2} \, \log\left(3\right)
\log\left(2\right) - \frac{1}{2} \, {\rm Li}_{2}(\frac{2}{3}) + \frac{3}{2} \,
{\rm Li}_{2}(\frac{1}{3}) - {\rm Li}_{2}(-\frac{1}{3})=\frac {\pi^{2}}{24}$ is probably well-known among the many identities for the dilogarithm function, but we could not find it in the literature.
 
 
\subsection{Variants of the classical continued fraction}

We have studied in Section~\ref{sec:farey} the unsorted Farey map. One of the advantages of this map is that it preserves orientation, and is related to matrices in $SL(2,\R)$. However, most presentations prefer to deal with the sorted map, restricting to the cone $x<y$, and in that case it is convenient to take as section $y=1$; this leads to the usual sorted Farey map, given by $x\mapsto \frac x{1-x}$ if $x<\frac 12$ and $x\mapsto \frac 1x-1$ if $x>\frac 12$. 

This map has only one indifferent fixed point, at 0. Taking the acceleration of the map at this fixed point leads to the usual Gauss map, $x\mapsto \left\{\frac 1x\right \}$.

Many other variants are possible, by taking different sections; they are all related to the geodesic flow on the modular surface and the group $SL(2,\Z)$.

\subsection{Higher dimensions}

It is natural to generalize to higher dimensions. For the reverse algorithm, a direct generalization is not possible: the basis of the algorithm is that the complement in the unit simplex of the three simplexes of size $\frac12$ given by the condition $x_i>\frac 12$ is also a simplex, which can be sent to the whole simplex by an homothety of ratio $-2$.  In dimension 4, the complement, in the unit simplex, of the four simplexes $x_i>\frac 12$ is no more a simplex, but an octahedron, so we cannot extend our simple solution to that case.

But it is possible, and easy, to generalize Brun algorithm to any dimension. We define it by subtracting the second largest coordinate by the largest coordinate. We can define an invariant domain for the natural extension in the same way. Let $\Lambda$ be the positive cone in $\R^d$, and define, for $\pi\in S_d$

\begin{align*}
\Lambda_\pi &= \{(x_1,\ldots,x_d) \in\Lambda \mid x_{\pi(1)}<x_{\pi(2)}<\cdots<x_{\pi(d)}\}\\
\Theta_\pi &= \{(x_1,\ldots,x_d) \in\Lambda \mid x_{\pi(1)}<x_{\pi(2)}<\cdots<x_{\pi(d-1)}\}\\
\Lambda^*_\pi &= \{(\alpha_1,\ldots,\alpha_d) \in\Lambda \mid \alpha_{\pi(i)}<\alpha_{\pi(d)}<\alpha_{\pi(d-1)}\text { if } i<d-1\}\\
\Theta^*_\pi &= \{(\alpha_1,\ldots,\alpha_d) \in\Lambda \mid \alpha_{\pi(i)}<\alpha_{\pi(d)}\text { if } i<d-1\}
\end{align*}

The same proof as in dimension 3, with heavier notations, proves that $\widetilde F$ sends $\Lambda_{\pi}\times \Theta^*_{\pi}$ to $\Theta_{\pi}\times \Lambda^*_{\pi}$. It is then straightforward to compute the invariant measure; similar formulas were given in \cite {MR1251147} and \cite {MR1810942}.

\begin{proposition} \label{prop:invariantmeasureBrundimd}
The density function of the invariant measure of $f:\Delta\to\Delta$ for
the $d$-dimensional Brun algorithm is
\[
    \delta(\bx) = 
    \frac{1}{(d-1)!\,x_{\pi(d-1)}}
    \sum_{A_1\subset A_2\subset\dots\subset A_{d-1}}
    \prod_{k=1}^{d-1}
    \frac{1}{1-\sum_{i\in A_k}x_i}
\]
on the part $\bx\in\Lambda_\pi$
where $A_1=\{\pi(d-1)\}$, $A_{d-1}=\{1,2,\dots,d\}\setminus\{\pi(d)\}$ and
$|A_k|=k$.
\end{proposition}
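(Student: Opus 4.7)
The plan is to apply Proposition~\ref{prop:arnouxnogueira} and compute explicitly the volume of the fiber of $\Sigma$ over $\bx$. By the manifest $\mathcal{S}_d$-symmetry of Brun's algorithm (discussed in Section~\ref{sec:remarks}), it suffices to treat the case $\pi=\mathrm{id}$; the general case then follows by coordinate relabeling. In the coordinates $\beta_i=\alpha_i-\alpha_d$ of Section~\ref{sec:constructing}, $\delta(\bx)$ equals the $d\beta_1\cdots d\beta_{d-1}$-volume of the polytope
\[
P(\bx)=\bigl\{\ba\in\R_+^d:\ \textstyle\sum x_i\alpha_i=1,\ \alpha_i<\alpha_d\text{ for }i\le d-2\bigr\},
\]
and on $\Delta$ the change from $(\alpha_1,\ldots,\alpha_{d-1})$ on the hyperplane $\sum x_i\alpha_i=1$ to $(\beta_1,\ldots,\beta_{d-1})$ has Jacobian $1/x_d$.

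The key geometric observation is that $P(\bx)$ is a pyramid. A tight-constraint count shows $P(\bx)$ has exactly $1+2^{d-2}$ vertices: a unique apex $v_\ast$ with $\alpha_{d-1}=1/x_{d-1}$ and all other coordinates zero (the vertex with $\alpha_d=0$), and, for each $S\subset\{1,\ldots,d-2\}$, a base vertex $v_S$ with $\alpha_i=1/(x_d+\sum_{j\in S}x_j)$ on $i\in S\cup\{d\}$ and zero elsewhere. All the $v_S$ lie in $\{\alpha_{d-1}=0\}$, so $P(\bx)$ is a pyramid of height $1/x_{d-1}$ with base $B=\mathrm{conv}\{v_S\}_{S\subset\{1,\ldots,d-2\}}$. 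The substitution $t_i=\alpha_i/\alpha_d$ identifies $B$ affinely with the unit cube $[0,1]^{d-2}$, sending $v_S$ to the indicator vector $\mathbf{1}_S$.

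I triangulate the cube by its classical staircase triangulation: each saturated chain $\emptyset=S_1\subsetneq S_2\subsetneq\cdots\subsetneq S_{d-1}=\{1,\ldots,d-2\}$ (equivalently an ordering of $\{1,\ldots,d-2\}$) determines a simplex in $B$ with vertices $v_{S_1},\ldots,v_{S_{d-1}}$, and coning over $v_\ast$ yields $(d-2)!$ full-dimensional simplices tiling $P(\bx)$. The Euclidean volume of each such simplex is a direct determinant computation in the coordinates $(\alpha_1,\ldots,\alpha_{d-1})$: with $v_{S_1}=v_\emptyset=\mathbf{0}$ as basepoint, the last row of the vertex-difference matrix contributes the single entry $1/x_{d-1}$ from the apex, while the remaining $(d-2)\times(d-2)$ block has entries $[i\in S_k]\,c_k$ with $c_k=1/(x_d+\sum_{j\in S_k}x_j)$; permuting its rows by the chain makes it unit upper triangular, with determinant $\pm\prod_{k=2}^{d-1}c_k$. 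Dividing by $(d-1)!$ and multiplying by $1/x_d$ for the Jacobian to $\beta$-coordinates yields the per-simplex contribution
\[
\frac{1}{(d-1)!\,x_{d-1}\,x_d}\prod_{k=2}^{d-1}\frac{1}{x_d+\sum_{j\in S_k}x_j}.
\]

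Summing and matching the proposition's formula is then pure algebra. Under the natural bijection $S_k=A_k\setminus\{d-1\}$ between my chains and those of the statement, the identity $1-\sum_{i\in A_k}x_i=x_d+\sum_{j\in\{1,\ldots,d-2\}\setminus S_k}x_j$ (from $\sum_i x_i=1$) recasts the proposition's product as $\prod_{k=1}^{d-1}1/(x_d+\sum_{j\in S^c_k}x_j)$; applying the order-reversing complementation $S_\bullet\mapsto S'_\bullet$ with $S'_k=S^c_{d-k}$ (a bijection on saturated chains with the given endpoints) and pulling out the $k=1$ factor $1/x_d$ transforms this into $(1/x_d)\sum_{S'_\bullet}\prod_{k=2}^{d-1}1/(x_d+\sum_{j\in S'_k}x_j)$, which combined with the proposition's $1/((d-1)!\,x_{d-1})$ prefactor recovers exactly my expression. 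The main technical obstacle is not the determinant or the reindexing---both routine---but verifying that the staircase triangulation of the cube lifts through the affine identification $B\simeq[0,1]^{d-2}$ to an honest simplicial tiling of $P(\bx)$ without overlap; this is the ``heavier notations'' step mentioned just before the statement of the proposition.
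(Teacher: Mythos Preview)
Your argument is correct in outline and yields the right formula, but it differs genuinely from the paper's proof. The paper works in the $\beta$-coordinates and invokes Lemma~\ref{lem:polytopevolume}, which decomposes the polytope recursively into $|I|-1$ pyramids sharing the apex where all $\beta_i$ coincide (this is your vertex $v_\emptyset$, not $v_\ast$), each with base $\beta_k=0$; the bases are polytopes of the same type with one fewer variable, and unrolling the recursion produces the sum over chains directly. Your route instead recognises the whole polytope as a single pyramid with apex $v_\ast$ over a combinatorial $(d-2)$-cube, and triangulates that cube by the staircase rule, computing each simplex volume by a determinant. What your approach buys is a non-recursive, explicitly enumerative picture (one simplex per chain from the start); what the paper's recursion buys is that it avoids identifying the base as a cube and needs no separate tiling verification.

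Two small corrections. First, the identification $t_i=\alpha_i/\alpha_d$ between $B$ and $[0,1]^{d-2}$ is \emph{projective}, not affine: the ``parallel'' edges of the cube do not remain parallel in $B$. This does not hurt you, because a projective map still sends simplices to simplices and preserves the incidence combinatorics, so the staircase triangulation transfers; but your closing sentence should appeal to projectivity, not affinity. Second, after permuting columns by the chain the $(d-2)\times(d-2)$ block is lower triangular with diagonal entries $c_2,\ldots,c_{d-1}$, not unit upper triangular; your determinant $\prod_{k=2}^{d-1}c_k$ is nonetheless correct. Finally, the phrase ``heavier notations'' just before the proposition refers to extending the proof that $\widetilde F$ maps $\Lambda_\pi\times\Theta^*_\pi$ onto $\Theta_\pi\times\Lambda^*_\pi$ to general $d$, not to the volume computation.
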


For example, taking $\pi=Id$, when $d=2$:
\[
\delta(x_1)=\frac{1}{x_{1}\,{\left(1 - x_{1}\right)}}
\]

When $d=3$:
\[
    \delta(x_1,x_2)=\frac{1}{2 \, x_{2}\, {\left(1-x_{1} - x_{2}\right)}{\left(1-x_{2}\right)}
 }
\]

When $d=4$:
\[
\delta(x_1,x_2,x_3)=
\frac{\frac{1}{{\left(1-x_{1} - x_{3}\right)} {\left(1-x_{3}\right)}
    } + \frac{1}{{\left(1 - x_{2} - x_{3}\right)} 
	{\left(1 - x_{3}\right)} }}
	{6 \,x_{3}\, {\left(1 - x_{1} - x_{2} - x_{3}\right)}}
\]

When $d=5$, $\delta(x_1,x_2,x_3,x_4)$ is
\[
\frac{\frac{\frac{1}{{\left(1 - x_{1} - x_{4}\right)} {\left(1-x_{4}
	\right)}} + \frac{1}{{\left(1 - x_{2} - x_{4}\right)}
    {\left(1 - x_{4}\right)}}}{1 - x_{1} - x_{2} - x_{4}} +
    \frac{\frac{1}{{\left(1 - x_{1} - x_{4}\right)} {\left(1 - x_{4}\right)}
	} + \frac{1}{{\left(1 - x_{3} - x_{4}\right)} {\left(1-x_{4}
    \right)}}}{1 - x_{1} - x_{3} - x_{4}} + 
    \frac{\frac{1}{{\left(1 - x_{2} - x_{4}\right)} {\left(1 - x_{4}\right)}} +
	\frac{1}{{\left(1 - x_{3} - x_{4}\right)} {\left(1 - x_{4}\right)}
}}{1 - x_{2} - x_{3} - x_{4}}}{24 \, x_{4}\, {\left(1-x_{1} - x_{2} - x_{3} - x_{4}\right)}}
\]

\begin{proof}
    In this proof, we suppose that $\pi=(1,2,\dots,d)$.
    For Brun algorithm, the surface of section for the part
    $\bx\in\Lambda_\pi$ is
\[
    \Sigma=
\{(x_1,\dots,x_d,\alpha_1,\ldots,\alpha_d)\in\R_+^{2d} \mid \,
    \sum_{i=1}^{d}x_i\alpha_i=1,
    \sum_{i=1}^{d}x_i=1,
    \bx\in\Lambda_\pi,
    \ba\in\Theta^*_\pi\}.
\]
As explained in section \ref{sec:constructing},
we use an explicit change of variables that is useful to compute explicitly
all these measures in the case of the norm $\|\bx\|=\sum_{i=1}^d |x_i|$.
Define $y_i=x_i$ for $i<d$, and $y_d=\log(\sum_{i=1}^d |x_i|)$; define
$\beta_i=\alpha_i-\alpha_d$ for $i<d$ and $\beta_d=\sum_{i=1}^d \alpha_i x_i$. A straightforward
computation shows that this change of coordinates has jacobian 1.
Below, we keep variables $x_i$ for $i<d$ instead of $y_i$.
We get that $\Sigma$ is the set of 
$(x_1,\dots,x_{d-1},y_d,\beta_1,\dots,\beta_{d-1},\beta_d)\in\R^{2d}$
    satisfying
\[
\left\{
\begin{array}{ll}
    y_d=0,&
\beta_i<0\text{ for } i<d-1,\\
\beta_d=1,&
\beta_i+\alpha_d>0\text{ for } i<d,\\
x_1<\dots<x_{d-1},\qquad& 
\alpha_d>0.
\end{array}
\right.
\]
The equalities $\beta_d=1$ and $\sum_{i=1}^d x_i=1$ allows to substitute
$\alpha_d$ above. Indeed,
\[
    1 
    = \beta_d 
    = \sum_{i=1}^d \alpha_i x_i
    = \sum_{i=1}^{d-1} (\beta_i+\alpha_d) x_i + \alpha_dx_d
    = \sum_{i=1}^{d-1} \beta_ix_i + \alpha_d\sum_{i=1}^dx_i
    = \sum_{i=1}^{d-1} \beta_ix_i + \alpha_d.
\]
So we get that $\Sigma$ is described by
\[
\left\{
\begin{array}{lll}
    y_d=0,&
\beta_i<0\text{ for } i<d-1,\\
\beta_d=1,&
\sum_{j=1}^{d-1} \beta_jx_j-\beta_i<1\text{ for } i<d,\\
x_1<\dots<x_{d-1},\qquad&
\sum_{j=1}^{d-1} \beta_jx_j<1.
\end{array}
\right.
\]
From Proposition~\ref{prop:arnouxnogueira}, the density $\delta(\bx)$ is equal
to the volume of the polytope $\{\bb: (\bx,\bb)\in\Sigma\}$:
\[
    \displaystyle
    \delta(\bx) = \int_{\{\bb: (\bx,\bb)\in\Sigma\}} 1 \,d\bb
\]
Given $\bx=(x_1,\dots,x_{d-1})\in\R^{d-1}$, the polytope corresponds to the set of
$\bb=(\beta_1,\dots,\beta_{d-1})\in\R^{d-1}$ satisfying the above inequalities.
We conclude the proof by using Lemma~\ref{lem:polytopevolume}.
\end{proof}

\begin{lemma}\label{lem:polytopevolume}
Let $I\subset\{1,2,\dots,d\}$ and $\ell\in I$.
Let $(x_1,x_2,\dots,x_d)\in\R_+^{d}$ be positive real numbers such that
$\sum_{i=1}^d x_i =1$. Let $P_{I,\ell}$ be the $|I|$-dimensional polytope
defined by the set of real vectors $(\beta_i : i\in I)$ that
satisfy:
\[
\left\{
\begin{array}{l}
    \beta_i<0\text{ for } i\in I\setminus\{\ell\},\\
\sum_{j\in I} \beta_jx_j-\beta_i<1\text{ for } i\in I,\\
\sum_{j\in I} \beta_jx_j<1.
\end{array}
\right.
\]
Then 
\[
    Vol(P_{I,\ell}) = \frac{1}{|I|(1-\sum_{i\in I} x_i)}
    \left(
	\sum_{k\in I\setminus\{\ell\}}Vol(P_{I\setminus\{k\},\ell})
    \right)
\]
\end{lemma}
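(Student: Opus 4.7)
The plan is to realize $P_{I,\ell}$ as a fan of pyramids sharing a single apex, and then apply the classical formula $Vol(\mathrm{pyramid})=\frac{1}{n}\cdot\mathrm{height}\cdot\mathrm{base\ area}$ in dimension $n=|I|$.

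\emph{Step 1: the apex.} I would first identify the point $v^* \in \R^{|I|}$ with coordinates $\beta^*_i = -1/(1-\sum_{j\in I} x_j)$ for all $i\in I$. This is well-defined because $\sum_{j\in I} x_j < 1$ whenever $I$ is a proper subset of $\{1,\dots,d\}$, which is the relevant case (and implicit in the statement, since otherwise the conclusion divides by $0$). A direct substitution gives $\sum_{j\in I}\beta^*_j x_j - \beta^*_i = 1$ for every $i \in I$, so all $|I|$ inequalities of this form are simultaneously tight at $v^*$; the coefficient matrix is non-singular, so $v^*$ is the unique such point. Since $\beta^*_i < 0$, the remaining defining inequalities hold strictly at $v^*$, and $v^*$ is a vertex of $P_{I,\ell}$.

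\emph{Step 2: the facets not containing $v^*$.} For $|I|\geq 2$, I would next observe that the constraint $\sum_j\beta_j x_j \leq 1$ is redundant: picking any $k \in I\setminus\{\ell\}$, we have $\beta_k \leq 0$, hence $\sum_j\beta_j x_j \leq \sum_j\beta_j x_j - \beta_k \leq 1$. The facets of $P_{I,\ell}$ therefore lie on the $|I|$ hyperplanes $\{\sum_j\beta_j x_j - \beta_i = 1\}$ (all of which contain $v^*$) and on the $|I|-1$ hyperplanes $\{\beta_k = 0\}$ for $k \in I\setminus\{\ell\}$ (none of which contain $v^*$, since $\beta^*_k \neq 0$). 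Let $F_k := P_{I,\ell}\cap\{\beta_k=0\}$.

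\emph{Step 3: fan decomposition and conclusion.} Since $P_{I,\ell}$ is bounded (the sign constraints and the $i=\ell$ inequality control every coordinate), the standard fan decomposition at a vertex applies: $P_{I,\ell} = \bigcup_{k \in I\setminus\{\ell\}} \mathrm{Pyr}(v^*, F_k)$ with pairwise disjoint interiors. The classical pyramid volume formula gives $Vol(\mathrm{Pyr}(v^*, F_k)) = \frac{1}{|I|}\,h_k\,Vol_{|I|-1}(F_k)$, where $h_k = |\beta^*_k| = 1/(1-\sum_{j\in I}x_j)$ is the perpendicular distance from $v^*$ to the hyperplane $\beta_k=0$, independent of $k$. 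Setting $\beta_k = 0$ in the defining inequalities of $P_{I,\ell}$ reproduces exactly those of $P_{I\setminus\{k\},\ell}$: the $i=k$ inequality collapses to $\sum_{j\in I\setminus\{k\}}\beta_j x_j \leq 1$, the inequalities $\beta_i\leq 0$ for $i\in I\setminus\{k,\ell\}$ survive, and the remaining $\sum_{j\in I\setminus\{k\}}\beta_j x_j - \beta_i \leq 1$ survive for $i\in I\setminus\{k\}$. Hence $Vol_{|I|-1}(F_k) = Vol(P_{I\setminus\{k\},\ell})$, and summing over $k$ yields the stated recursion.

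The main obstacle is the redundancy argument in Step 2, which requires $|I|\geq 2$ in an essential way: when $|I|=1$ the constraint $\sum_j\beta_j x_j\leq 1$ actually cuts out a facet of $P_{\{\ell\},\ell}$ that does not pass through $v^*$, so the recursion breaks down and must bottom out, with the base case $Vol(P_{\{\ell\},\ell}) = 1/(x_\ell(1-x_\ell))$ computed directly from the interval $\beta_\ell\in(-1/(1-x_\ell),\,1/x_\ell)$.
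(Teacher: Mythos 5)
Your proof is correct and follows essentially the same route as the paper: decompose $P_{I,\ell}$ into pyramids sharing the single apex where all the constraints $\sum_{j\in I}\beta_j x_j - \beta_i = 1$ are simultaneously tight, with bases on the hyperplanes $\beta_k = 0$ that reproduce $P_{I\setminus\{k\},\ell}$, and then apply the pyramid volume formula with the common height $1/(1-\sum_{j\in I}x_j)$. You phrase the decomposition as the standard fan at a vertex while the paper partitions by the index $k$ maximizing $\beta_k$ over $I\setminus\{\ell\}$, but these yield the same pieces; moreover you correctly compute the apex as $\beta^*_i = -1/(1-\sum_{j\in I}x_j)$ (the paper's printed formula for $S$ is missing the minus sign) and you make explicit the redundancy of $\sum_{j\in I}\beta_j x_j < 1$ for $|I|\geq 2$ and the one-dimensional base case, both of which the paper leaves implicit.
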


\begin{proof}
    First notice that $\beta_i=\frac{1}{(1-\sum_{i\in I} x_i)}$ for all $i\in
    I$ is a solution to the second equation. 
    This is a vertex of the polytope that we denote $S$.
    The interior of the polytope can be partionned into $|I|-1$
    part $p_k$ according to the index $k\in I\setminus\{\ell\}$ for which
    $\beta_k\geq\beta_i$ for all $i\in I\setminus\{\ell\}$.
    Note that the vertex $S$ is also a vertex of each of these parts.
    In fact, we can see that each part is in fact a pyramid with top vertex
    $S$ and base given by the equation $\beta_k=0$.
    That base is a $(|I|-1)$-dimensional polytope given by the equations
\[
\left\{
\begin{array}{l}
    \beta_i<0\text{ for } i\in I\setminus\{k,\ell\},\\
    \sum_{j\in I\setminus\{k\}} \beta_jx_j-\beta_i<1\text{ for } i\in I\setminus\{k\},\\
\sum_{j\in I\setminus\{k\}} \beta_jx_j<1
\end{array}
\right.
\]
which is exactly $P_{I\setminus\{k\},\ell}$ and
whose volume is $Vol(P_{I\setminus\{k\},\ell})$.
\end{proof}

\section*{Acknowledgements}

This work was supported by the Agence Nationale de la Recherche and the
Austrian Science Fund through the projects ``Fractals and Numeration''
(ANR-12-IS01-0002, FWF I1136) and ``Dynamique des algorithmes du pgcd : une
approche Algorithmique, Analytique, Arithmétique et Symbolique (Dyna3S)''
(ANR-13-BS02-0003).
The second author is supported by a postdoctoral Marie Curie fellowship
(BeIPD-COFUND) cofunded by the European Commission.
We wish to thank Valérie Berthé for many fruitful discussions on the subject.
We are grateful to the anonymous referees for their many valuable comments. 

\bibliographystyle{alpha} 
\bibliography{biblio}

\end{document}